\date{}
\newtheorem{remark}{Remark}
\newtheorem{lemma}{Lemma}
\newtheorem{theorem}{Theorem}
\let\Delta\relax
\newcommand{\Delta}{\nabla^2}
\begin{document}


\title{Poisson-Nernst-Planck equations with steric effects - non-convexity and multiple stationary solutions}

 \author{Nir Gavish\thanks{ngavish@technion.ac.il}\thanks{Mathematics Department, Technion - Israel Institute of Technology, Haifa, 32000, Israel}}

\maketitle
\begin{abstract}
We study the existence and stability of stationary solutions of Poisson-Nernst-Planck equations with steric effects (PNP-steric equations)
with two counter-charged species.  We show that within a range of parameters, steric effects give rise to multiple solutions { of the corresponding stationary equation} that are smooth.  { The PNP-steric equation, however, is found to be ill-posed at the parameter regime where multiple solutions arise.}
Following these findings,  we introduce a novel PNP-Cahn-Hilliard model,  show that it { is well-posed and that it} admits multiple stationary solutions that are smooth and stable.  The various branches of stationary solutions and their stability are mapped utilizing bifurcation analysis and numerical continuation methods.  
\end{abstract}
\section{Introduction}
It is difficult to overstate the importance of electrolyte solutions in biological and electrochemical systems.  The pioneering studies of Nernst and Planck in 1889-1890~\cite{nernst1889elektromotorische,planck1890ueber} led to the Poisson-Nernst-Planck (PNP) theory, which is one of the most widely used continuum mean-field model to describe electrolyte behavior.  PNP theory, however, is valid only at exceedingly dilute solutions due to various underlying assumptions.  In particular, the model assumes point-like ions, while neglecting crowding effects due to the finite-size of the ions.  The shortcomings of the PNP model has led to the development of a wide family of generalized Poisson-Nernst-Planck models that take into account ion-ion and/or ion-solvent interactions~\cite{gillespie2002coupling,bikerman1942xxxix,borukhov1997steric,kilic2007steric,ben2009beyond,lopez2011poisson,stern1924theorie,di2004specific,di2003simple,mansoori1971equilibrium,Horng:2012io,gongadze2015asymmetric,ben2011ion,booth1951dielectric,hatlo2012electric,ben2011dielectric,psaltis2011comparing,bazant2011double,gavish2015systematic,liu2014poisson,eisenberg2010energy}; for an overview the reader is also referred to~\cite{Bazant:2009fp,iglivc2015nanostructures} and references within.  Among these models, we particularly focus here on the PNP-steric~\cite{lin2014new,Horng:2012io} model
\begin{subequations}\label{eq:stericPNP_intro}
\begin{equation}
\begin{split}
&\frac{\partial c_i}{\partial t}+\nabla\cdot J_i=0,\qquad i=1,2,\cdots,N,\\
&\phi_{xx}+\sum_{i=1}^N z_ic_i=0,
\end{split}
\end{equation}
where
\begin{equation}\label{eq:JiPNP}
J_i=-c_i\nabla \left(\log c_i+z_i\phi+\sum_{i=1}^N g_{ij}c_j\right).
\end{equation}
\end{subequations}
The above system is written using non-dimensional variables, where~$c_i$ are the concentrations of the~$i^{\rm th}$ ionic species with corresponding valence~$z_i$,~$\phi$ is the electric potential, and~$g_{ij}=g_{ji}$ are non-negative constants depending on ion radii and the energy coupling constant of the~$i^{\rm th}$ and~$j^{\rm th}$ species ions~\cite{Horng:2012io}.  When~$g_{ij}=0$ for all~$1\le i,j\le N$, the PNP-steric equations reduces to the classical PNP model that does not account for finite-size effects.
  
A wide family of generalized PNP models share common features - these models give rise to unique stationary solutions which in the absence of applied external potential reduces to a homogenous state~\cite{Bazant:2009fp,EJM:10386443}.  The uniqueness of a stationary solution, however, is challenged by observations of ionic currents through ionic channels.  Indeed, experiments~\cite{eisenberg1996atomic} and simulations~\cite{kaufman2013multi} measuring currents through single protein channels show that single channels switch between `open’ or `closed’ states in a spontaneous stochastic process called gating. Currents are either (nearly) zero or at a definite level, characteristic of each type of protein, independent of time, once the channel is open.  These observations of a bi-stable system strongly suggest that an underlying model for ion channels should have multiple stationary solutions, one corresponding to an `open' state and another to a `closed' state.  Following this insight, the recent work~\cite{lin2015multiple} due to Lin and Eisenberg considered the existence of multiple stationary solutions of the (non-dimensional) PNP-steric equations~\eqref{eq:stericPNP_intro} { in a general (namely, not in an ion-channel) setting}.  

{ We note that PNP equations with piecewise constant fixed charge can give rise to multiple solutions, see, e.g.,~\cite{rubinstein1987multiple,liu2009one,eisenberg2007poisson}.  The work~\cite{lin2015multiple}, as well as the current study, is concerned whether the PNP-steric equations with spatially constant permanent charges can give rise to multiple steady-states.  Namely, whether steric effects themselves are sufficient to give rise to multiple stationary solutions.}
The study~\cite{lin2015multiple} showed that when~$N=2$,~$z_1>0>z_2$ and~$g_{11}=g_{22}$, the PNP-steric system gives rise to a single stationary solution, but  proved the existence of multiple stationary solutions in multi-species systems,~$N\ge 3$.    This work, however, considered solely the stationary equations satisfying
\begin{equation}\label{eq:steadyStateStericPNP_Boltzmann}
\log c_i+z_i\phi+\sum_{i=1}^2 g_{ij}c_j=\lambda_i,\quad i=1,\cdots,N,
 \end{equation}
 where~$\lambda_i$ is a Lagrange multiplier associated with the conservation of mass, coupled with Poisson's equation.  Particularly, these studies characterized branches of solutions that are functions of~$\phi$, i.e.~$c_i=c_i(\phi)$, defined by the algebraic system~\eqref{eq:steadyStateStericPNP_Boltzmann}.  Dynamics were not considered in these studies.  It, therefore, remains an open question whether the PNP-steric model with two species admits smooth multiple stationary solutions, and whether these solutions would be stable under PNP-steric dynamics.  { These open questions are addressed in this study.}
 
 \subsection{Main results and paper outline}
In this work, we study the PNP-steric system with two counter-charged species.  We show that under certain conditions, particularly, for sufficiently concentrated (large~$L^1$ norm or average ionic concentration) initial conditions, the { reduced time-independent system for the stationary states} does give rise to smooth multiple  solutions.  { However, at the parameter regime where multiple stationary solutions arise, the PNP-steric equation is found to be ill-posed}.  The finding that the PNP-steric system { is ill-posed for sufficiently} concentrated solutions raises a need to develop a model that is valid in this regime.   In the second part of this study, we introduce a novel PNP-Cahn-Hilliard (PNP-CH) model which is { well-posed} also at high bulk concentrations.  We show that this model gives rise to multiple stationary solutions that are stable with respect to PNP-CH dynamics.  These stationary solutions differ by their bulk structure.  Utilizing bifurcation analysis and a numerical continuation study, we map the various branches of stationary solutions.   
  
The paper is organized as follows. In Section~\ref{sec:model} we briefly review the PNP-steric model derivation.  In Section~\ref{sec:convex} we show that under certain conditions, the corresponding free energy of the PNP-steric model is convex, and review the theory of generalized PNP models with convex energy.  In particular, these models give rise to a unique stationary solution.  Section~\ref{sec:nonConvex} focuses on the case where the corresponding free energy of the PNP-steric system is non-convex.   
In contrast to the previous works, e.g.~\cite{lin2015multiple}, we do not focus on the study of the solutions profiles~$c_i(\phi)$, but rather present a different approach, and consider the solutions trajectories in the~$(c_1,c_2)$ plane.  We show that the~$(c_1,c_2)$ plane can be divided into two regions,~$D^{\rm convex}$ in which the corresponding free energy of the PNP-steric system is locally convex and~$D^{\rm concave}$ in which the free energy is locally concave in~$(c_1,c_2)$.  We then present a  classification of trajectories based on their path within these two regions.
Solution trajectories which are fully contained in~$D^{\rm convex}$ (aka type I trajectories) are studied in Section~\ref{sec:typeI}, and are shown to reflect the case where steric interactions are of perturbative nature, namely, they do not change qualitative properties of the stationary solutions .  In contrast, solutions which at the bulk reside in~$D^{\rm concave}$ reflect a case where steric effects change the qualitative behavior of the equation and give rise to structure formation in the bulk (aka type III solutions).  In Section~\ref{sec:typeIII} we characterize periodic solutions of the PNP-steric system in this case, and show that they co-exist with the homogenous stationary solution.  The above, thus, presents a full characterization of multiple stationary solutions of the PNP-steric model { via the study of the reduced time-independent equations}.  
Next, we go beyond the study of the stationary equations and consider stability of the stationary solutions with respect to PNP-steric dynamics.  Our results show the PNP-steric system becomes { ill-posed} for concentrated solutions, and in particular in the regimes where multiple stationary solutions arise.  Section~\ref{sec:interimSummary} presents an interim summary of the study of the PNP-steric model, as well as a discussion regarding the reasons for the failure of the PNP-steric system to describe concentrated solutions:
Intuitively,  in classical PNP theory, it is the entropy that regularizes the solution and drives the system toward a spatially homogenous state.  At high bulk concentrations, we show, however, that steric effects dominate entropic effects.  Consequently, entropy fails to regularize the solution and the system becomes { ill-posed} as it is pushed toward higher and higher frequencies.  To overcome this problem, in Section~\ref{sec:PNPCH} we introduce a gradient energy term which regularizes the solution by direct energetic penalty of large gradients, giving rise to the PNP-Cahn-Hilliard (PNP-CH) system.  Such gradient energy terms arise naturally in Cahn-Hilliard (Ginzburg-Landau) mixture theory~\cite{cahn1958free}, and has been recently used to model steric interactions in ionic liquids~\cite{gavish2016theory}.   
Section~\ref{sec:PNPCH_derivation} presents the derivation of the PNP-CH system.  Sections~\ref{sec:linearAnalysis} and~\ref{sec:weaklyNonlinear} present the linear and weakly nonlinear stability analysis of the homogenous state in an infinite domain, and characterizes the regimes in which super-critical and sub-critical bifurcations arise.  Sections~\ref{sec:finiteDomain} and~\ref{sec:appliedVoltage} consider the case of finite domain with possible applied voltage using numerical continuation techniques.   The results present a wide family of stable stationary solutions of the PNP-CH system, showing a that finite domain has a significant effect on the solutions and their stability.  Numerical methods are discussed in Section~\ref{sec:numerics}, and concluding remarks are presented in  Section~\ref{sec:discussion}.
\section{The PNP-steric Model}\label{sec:model}
We briefly recall the PNP-steric model~\cite{lin2014new,Horng:2012io}.  For simplicity, we consider only the case of an electrolyte with two ionic species~$c_1,c_2$ with valences~$z_1, z_2$, respectively, such that
\begin{equation}\label{eq:valence}
z_2<0<z_1.
\end{equation}
The solution is bounded between two electrodes which are situated at~$x=-L$ and~$x=L$, with surface charge density such that the electric potential~$\phi$ on the walls equals,
\begin{equation}\label{eq:boundaryPhi}
\phi(\pm L)=\phi_{\pm L},
\end{equation}
where the reference potential is taken to be zero voltage at the bulk\footnote{Namely, at~$L\to\infty$, the average electric potential at the interior of the domain is zero.  This implies that the homogenous state~$(\bar{c}_1,\bar{c}_2)$ is a stationary solution of the PNP-steric system when~$\phi_{\pm L}=0$. }.  
The system is globally electroneutral,
\begin{equation}\label{eq:zeroVoltageRefPoint}
z_1\bar{c}_1+z_2 \bar{c}_2+\rho_0=0,
\end{equation}
where~$\bar{c}_i$ is the average density of the~$i^{\rm th}$ ionic species
\begin{equation} \label{eq:chargeConservation}
\bar{c}_i:=\frac1{2L}\int_{-L}^L c_i(x) \,dx,
\end{equation}
and where~$\rho_0$ is a possible permanent `background' charge density. The solvent is described as a dielectric medium with a uniform dielectric response~$\epsilon$.  

The action potential describing the system is given by
\begin{equation}
\label{eq:stericPNP_action_potential}
\mathcal{A}(c_1,c_2,\phi) = \int_{-L}^{L} k_BT\underbrace{\sum_{i=1}^2c_i\left(\ln \frac{c_i}{\bar{c}_i}-1\right)}_{\rm entropy}+\underbrace{q\,{\bf z}^T\,{\bf c}\phi  - \frac{\epsilon}{2}|\nabla \phi|^2 }_{\rm electrostatic}+\underbrace{\frac12{\bf c}^T {\bf G}{\bf c}}_{\rm steric}\, dx,\qquad 
\end{equation}
where~${\bf c}=(c_1,c_2)$,~${\bf z}=(z_1,z_2)$,~${\bf \bar c}=(\bar c_1,\bar c_2)$,~$q$ is the unit of electrostatic charge,~$k_B$ is Boltzmann's constant, and~$T$ is the temperature.  The action potential consists of the standard entropic and electrostatic free energy together with a correction functional~${\bf c}^T {\bf G}{\bf c}$, where~${\bf G}=[g_{ij}]$ is a symmetric $2\times 2$ matrix with non-negative entries, which accounts for steric interactions between ions~\cite{lin2014new,Horng:2012io}.  Particularly, when~${\bf G}\equiv {\bf 0}$, steric interactions are neglected and the action potential~$\mathcal{A}$ reduces to the action potential corresponding to the Poisson-Nernst-Planck model.

Requiring that~$\phi$ is a critical point of the action~$\mathcal{A}$ yields Poisson's equation
\begin{subequations}\label{eq:stericPNP_dimensional}
\begin{equation}\label{eq:genPNP_Poisson_dimensional}
 0 = \frac{\delta \mathcal{A}}{\delta \phi} =\varepsilon \phi_{xx}+q \left({\bf z}^T{\bf c}+\rho_0\right)=0.
\end{equation}
The (Wasserstein) gradient flow along the action,
\[
\frac{dc_i}{dt}=\breve{D}\nabla\cdot \left(c_i\nabla\frac{\delta  \mathcal{A}}{\delta c_i}\right),\quad i=1,2,
\]
where~$\breve{D}$ is the mobility, 
gives rise to generalized Nernst-Planck equations for the ionic transport
\begin{equation}\label{eq:Nernst-Planck}
\frac{dc_i}{dt}+\nabla \cdot J_i=0,\qquad J_i=-\breve{D}c_i\nabla\frac{\delta  \mathcal{A}}{\delta c_i}=-\breve{D}c_i\nabla \left(k_BT\log c_i+z_iq\phi+\sum_{j=1}^2 g_{ij}c_j\right).
\end{equation}
\end{subequations}
Introduction of the non-dimensional variables 
\begin{equation}\label{eq:scaling}
\tilde\phi=\frac{q}{k_BT}\phi,\quad \tilde x=\frac{x}{\lambda_D},\quad \tilde c_i=\frac{c_i}{c^o},\quad \tilde{\bar{c}}_i=\frac{\bar{c}_i}{c^o}, \quad \tilde{\bf G}=\frac1{k_BT}{\bf G},\quad \tilde t=\frac{\lambda_D^2}{\breve{D}\,k_BT}t,
\end{equation}
where~$\lambda_D$ is the Debye length 
\[
\lambda_D:=\sqrt{\frac{k_BT\varepsilon}{q^2c^o}},
\]
and~$c^o$ is an arbitrary reference density, see remark~\ref{rem:c0} below, yields  the PNP-steric equation (presented after omitting the tildes)
\begin{subequations}\label{eq:stericPNP}
\begin{equation}\label{eq:genPB_dimensional}
\begin{split}
&\frac{dc_i}{dt}+\nabla \cdot J_i=0,\qquad J_i=-c_i\nabla \left(\log c_i+z_i\phi+\sum_{j=1}^2 g_{ij}c_j\right),\\
&\phi_{xx}=-\left({\bf z}^T{\bf c}+\rho_0\right),
\end{split}
\end{equation}
subject to the boundary conditions
\begin{equation}\label{eq:genPNP_nondimensional_BC}
\phi(\pm L)=\phi_{\pm L},\quad \left. J_i\right|_{\pm L}=0.
\end{equation}
\end{subequations}
In particular, the stationary charge density profiles satisfy the Poisson-Boltzmann-steric (PB-steric) equations, composed of generalized Boltzmann equations
\begin{subequations}\label{eq:stericPB}
\begin{equation}\label{eq:genPB_Boltzmann_nondimensional}
\frac{\delta \mathcal{A}}{\delta c_i}=\log c_i+z_i\phi+\sum_{j=1}^2 g_{ij}c_j=\lambda_i(\bar{\bf c})=\log \bar c_i+\sum_{j=1}^2 g_{ij}\bar c_j,\quad i=1,2,\\
\end{equation}
coupled with Poisson's equation
\begin{equation}\label{eq:stericPB_Poisson}
\phi_{xx}=-\left({\bf z}^T{\bf c}+\rho_0\right),
\end{equation}
and subject to the boundary conditions
\begin{equation}\label{eq:genPB_nondimensional_BC}
\phi(\pm L)=\phi_{\pm L}.
\end{equation}
Here,~$\lambda_i$ is chosen so the reference electric potential is (arbitrarily) set to~$\phi=0$ when~$c_i=\bar{c}_i$.
\end{subequations}

\begin{remark}\label{rem:c0}Note the explicit dependence of~$\lambda_i$ in~${\bf \bar{c}}$, see~\eqref{eq:genPB_Boltzmann_nondimensional}.  Typically, this explicit dependence is eliminated by setting the reference density~$c^o$, see~\eqref{eq:scaling}, to be the bulk density, e.g.,~$\bar{c}=\bar{c}_1$.  
We retain an arbitrary reference density, $c^o$, in order to preserve the central role played by the connection between the bulk densities $\bar{c}_i$ and the Lagrange multipliers.
\end{remark}

\section{The case of convex energy and the functions $c_i(\phi)$}\label{sec:convex}
The free energy is attained by substituting Poisson's equation~\eqref{eq:genPNP_Poisson_dimensional} into the action potential~\eqref{eq:stericPNP_action_potential}, and rescaling by~\eqref{eq:scaling}, yielding
\begin{equation}
\label{eq:genPB_free_energy_with_L}
\mathcal{E}(c_1,c_2,\phi) = \int_{-L}^{L} \underbrace{h(c_1,c_2)}_{\rm entropy+steric}+\underbrace{\frac{\epsilon}{2}|\nabla \phi|^2 }_{\rm electrostatic}\, dx, 
\end{equation}
where
\begin{equation}\label{eq:h_of_stericPNP}
h({\bf c})=\sum_{i=1}^2c_i\left(\ln \frac{c_i}{\bar{c}_i}-1\right)+\frac12{\bf c}^T {\bf G}{\bf c}.
\end{equation}
In~\cite{EJM:10386443}, we have studied the case of a general function~$h$, aka the family of generalized PNP models with corresponding free energy~\eqref{eq:genPB_free_energy_with_L}.  In particular, we focused on the case when the function~$h$ is strictly convex.  As will be shown, in this case the corresponding generalized PNP model cannot give rise to multiple stationary solutions. Nevertheless, for completeness and to allow comparison, we briefly review this case of a convex free energy.  

By Lemma~\cite[Lemma 1]{EJM:10386443}, for any strictly convex function~$h$, the generalized Boltzmann system
\[
h_{c_1}({\bf c})+z_1\phi=h_{c_1}({\bf \bar c}),\qquad h_{c_2}({\bf c})+z_2\phi=h_{c_2}({\bf \bar c}),
\] 
uniquely defines the functions~$\{c_1(\phi,{\bf \bar{c}}),c_2(\phi,{\bf \bar{c}})\}$. 
\begin{equation}\label{eq:Poisson_autonomous}
\left[\begin{array}{c}\phi\\E\end{array}\right]_x
=
\left[\begin{array}{c}
E\\
-{\bf z}^T{\bf c}(\phi;\bar{\bf c})-\rho_0
\end{array}
\right],
\end{equation}
where the dependence of~${\bf c}=(c_1,c_2)$ upon~$\phi$ depends on~$h(c_1,c_2)$ via the corresponding generalized Boltzmann system.
The system~\eqref{eq:Poisson_autonomous}, together with~\eqref{eq:genPB_Boltzmann_nondimensional}, uniquely defines a stationary solution~$(c_1(x),c_2(x),\phi(x))$ of the corresponding generalized PNP equations~\cite{EJM:10386443}.  Furthermore, in~\cite{EJM:10386443}, we have shown that, for all strictly convex~$h$, the dynamical systems~\eqref{eq:Poisson_autonomous} are locally homeomorphic.  In this sense, all generalized Poisson-Boltzmann systems with strictly convex~$h$, ``behave the same''.  In particular, at the limit of zero applied voltage~$\phi_{\pm L}\to0$, the solutions of all such generalized Poisson-Boltzmann systems, subject to the constraint~\eqref{eq:chargeConservation}, tend to the homogenous bulk state~${\bf c}\equiv(\bar{c}_1,\bar{c}_2)$~\cite[Section 3.1]{EJM:10386443}.
Trivially, these stationary solutions are minimizers of the energy~$\mathcal{E}$, and therefore are stable under Nernst-Planck dynamics which are (Wasserstein) gradient flows along~$\mathcal{E}$.  

In the case of the PNP-steric system, see~\eqref{eq:h_of_stericPNP},
\begin{equation}\label{eq:Hessian_G_relation}
\mbox{diag}({\bf c})\mbox{Hessian}_{{ \bf c}}(h)=I+\mbox{diag}({\bf c}){\bf G}.
\end{equation}
\begin{lemma} \label{lem:Gpsd}
The PNP-steric system~\eqref{eq:stericPNP} has a corresponding strictly convex free energy,~$h$, for all~${\bf c}>0$ if and only if~${\bf G}$  is a positive semi-definite matrix.
\end{lemma}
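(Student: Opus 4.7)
The plan is to reduce the claim to a direct statement about the Hessian of $h$ using the identity~\eqref{eq:Hessian_G_relation}, and then argue both directions essentially by inspection. Since~$\text{diag}(\mathbf{c})$ is invertible for~$\mathbf{c} > 0$, equation~\eqref{eq:Hessian_G_relation} gives
\[
\mathrm{Hessian}_{\mathbf{c}}(h) \;=\; \mathrm{diag}\!\left(1/c_1,\,1/c_2\right) + \mathbf{G}.
\]
(This can also be read off directly from~\eqref{eq:h_of_stericPNP}: the entropy term contributes the diagonal part and the quadratic steric term contributes $\mathbf{G}$.) Strict convexity of the smooth function~$h$ on the open convex set~$\{\mathbf{c} > 0\}$ is equivalent to its Hessian being positive semi-definite everywhere, and it is implied by the Hessian being positive definite everywhere, so I will work with the Hessian throughout.

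For the ``if'' direction, suppose~$\mathbf{G}$ is positive semi-definite. Then for any nonzero~$\mathbf{v} \in \mathbb{R}^2$ and any~$\mathbf{c} > 0$,
\[
\mathbf{v}^T \mathrm{Hessian}_{\mathbf{c}}(h)\,\mathbf{v} \;=\; \frac{v_1^2}{c_1} + \frac{v_2^2}{c_2} + \mathbf{v}^T \mathbf{G}\,\mathbf{v} \;>\; 0,
\]
since the first two terms are strictly positive. Hence $\mathrm{Hessian}_{\mathbf{c}}(h)$ is positive definite throughout~$\{\mathbf{c} > 0\}$, which gives strict convexity of~$h$.

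For the ``only if'' direction, I will argue by contrapositive via a limit. Assume~$\mathbf{G}$ is not positive semi-definite, so there exists~$\mathbf{v} \neq 0$ with~$\mathbf{v}^T \mathbf{G}\,\mathbf{v} < 0$. Choose~$\mathbf{c} = (c,c)$ with $c > 0$ large; then
\[
\mathbf{v}^T \mathrm{Hessian}_{\mathbf{c}}(h)\,\mathbf{v} \;=\; \frac{|\mathbf{v}|^2}{c} + \mathbf{v}^T \mathbf{G}\,\mathbf{v},
\]
which is strictly negative for all sufficiently large~$c$. At such a point the Hessian fails to be positive semi-definite, so $h$ is not convex (let alone strictly convex) on~$\{\mathbf{c} > 0\}$.

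The proof is essentially a one-line diagonal-plus-symmetric decomposition of the Hessian; there is no real obstacle. The only point worth flagging is the logical gap between ``Hessian positive definite'' and ``strict convexity'': the ``if'' direction establishes the stronger statement (positive-definite Hessian), while the ``only if'' direction only needs the weaker necessary condition (positive semi-definite Hessian), and this weaker necessary condition is exactly what the large-$c$ limit contradicts.
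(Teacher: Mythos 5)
Your proof is correct and follows the same basic route as the paper: reduce the claim to the Hessian identity and handle both directions by inspection of the quadratic form. The ``if'' direction is identical to the paper's. In the ``only if'' direction you differ slightly: the paper takes the eigenvector $v_-$ of the negative eigenvalue $\lambda_-$ of ${\bf G}$ and evaluates the Hessian at the specific point ${\bf c}_-=(-1/\lambda_-,-1/\lambda_-)$, where $I+\mathrm{diag}({\bf c}_-){\bf G}$ becomes singular with $v_-$ in its kernel; you instead send $c\to\infty$ along the diagonal to make the quadratic form strictly negative. Your variant is in fact a bit tighter, since a merely singular Hessian at a single point does not by itself contradict strict convexity (consider $x\mapsto x^4$), whereas exhibiting a direction of strictly negative curvature genuinely rules out convexity. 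One small slip in your setup: strict convexity is not \emph{equivalent} to a positive semi-definite Hessian everywhere (that condition characterizes convexity, not strict convexity); but, as you flag yourself, your argument only uses the two correct implications --- positive definite Hessian implies strict convexity, and failure of positive semi-definiteness at a point implies failure of convexity --- so nothing breaks.
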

\begin{proof}
Note that the Lemma also applies to the case of multi-species~$N\ge2$.

If~$G$ is a positive semi-definite matrix (p.s.d.), then by~\eqref{eq:Hessian_G_relation},~$\mbox{Hessian}_{{\bf c}}(h)$ is a positive definite matrix, hence~$h$ is strictly convex.

Assume that~$G$ is not p.s.d., then~${\bf G}$ has at least one negative eigenvalue~$\lambda_-<0$ with corresponding eigenvector~$v_-$, i.e.,
\[
{\bf G} v_-=\lambda_-v_-,\quad \lambda_-<0.
\]
Let~${\bf c}_-=-(1/\lambda_-,\cdots,1/\lambda_-)$.
Then,
\[
\mbox{diag}({\bf c}_-){\bf G}v_-=-v_-
\]
Hence,~${\bf I}+\mbox{diag}({\bf c}_-){\bf G}$ is singular, where~$v_-\in \ker\{\mbox{Hessian}_{{ \bf c}}(h)\}$, implying that~$h$ is not strictly convex for all~${\bf c}>0$, see~\eqref{eq:Hessian_G_relation}.
\end{proof} 
\begin{figure}[ht!]
\begin{center}
\scalebox{0.9}{\includegraphics{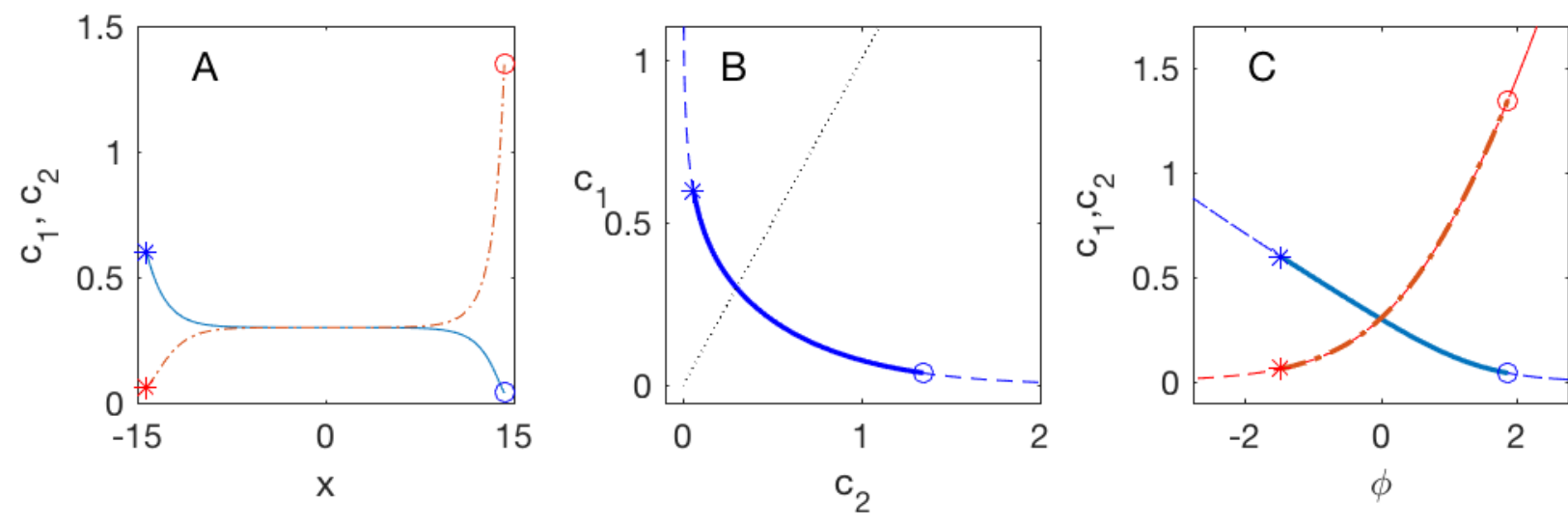}}
\end{center}
\caption{Solutions of the PB-steric system\protect\footnotemark~\eqref{eq:stericPB}
in the domain~$[-L,L]$ where~$L=14.25$,~$z_1=1=-z_2$,~$g_{11}=3.4$,~$g_{22}=0.6$, ~$g_{12}=1$,~$\rho_0=0$,~$\phi_{-L}=-1.46$,~$\phi_L=1.86$,~$\bar{c}_1=0.3$ and~$\bar{c}_2=0.32$.
A: Solution profiles~$c_1(x)$ ({\color{blue} solid}) and~$c_2(x)$ ({\color{red} dash-dot}). 
B:   Solution profile in the~$(c_1,c_2)$ plane ({\color{blue} solid}), solution trajectory ({\color{blue} dashes}), and the curve~$\{{\bf c}|E_x=0\}$ (dotted).   C: Solution profiles~$c_1(\phi)$ ({\color{blue} solid}) and~$c_2(\phi)$ ({\color{red} dash-dot}). Dashed curves are the functions~$\{c_i(\phi)\}$ defined by the steric Boltzmann equations~\eqref{eq:genPB_Boltzmann_nondimensional}.  The profiles values at points~$x^*=-L$ and~$x^\circ=L$ are marked by $\circ$ and $*$ markers, respectively, in all graphs.}
\label{fig:convexCase} 
\normalsize
\end{figure}
Figure~\ref{fig:convexCase}A presents an example of the stationary profiles of the PNP-steric~\eqref{eq:stericPNP} in a case where~${\bf G}$ is positive-definite.   It can be seen that in the interior of the domain, and far from the boundaries, the solution reaches a homogenous state.  The profiles~$c_i(\phi)$, corresponding to the solution of Figure~\ref{fig:convexCase}A, are 
presented in Figure~\ref{fig:convexCase}C.  As expected, see~\cite[Lemma 1]{EJM:10386443}, these profiles define unique functions of~$\phi$ which satisfy~\eqref{eq:genPB_Boltzmann_nondimensional}.    
The focus of this work is on multiple stationary solutions.  Therefore, in what follows, we will consider the PNP-steric model when~$h$ is not convex.

\footnotetext{In practice, we solve numerically an associated initial value problem, see Section~\ref{sec:numerics} for details, and extract the relevant parameters of the boundary value problem~\eqref{eq:stericPB} from its solution.  Accordingly, many of the parameters are not round numbers.\label{footnote:numerics}}

\section{Non-convex energetic landscape of the PNP-steric system}\label{sec:nonConvex}
Let us consider stationary solutions of the PNP-steric system~\eqref{eq:stericPNP} with counter-charged two species in the non-convex case, i.e., the case~$|{\bf G}|<0$, see Lemma~\ref{lem:Gpsd}.
In what follows, it becomes useful to consider the differential form of the steric Boltzmann equations~\eqref{eq:genPB_Boltzmann_nondimensional}.  By differentiation of~\eqref{eq:genPB_Boltzmann_nondimensional} by~$x$ and formal isolation\footnote{We consider below the cases where the system is not invertible, i.e.,~$D=0$, see, e.g., Lemma~\ref{lem:D=0}.} of~$c_{1,x}$ and~$c_{2,x}$, 
the PB-steric system~\eqref{eq:stericPB} reads as 
\begin{subequations}\label{eq:StericPB_inverse}
\begin{equation}\label{eq:E}
E_x=-[z_1(c_1-\bar{c}_1)+z_2(c_2-\bar{c}_2)],
\end{equation}
\begin{equation}\label{eq:cix}
\begin{split}
c_{1,x}&=-\frac{z_1(1+c_2g_{22})-z_2c_2g_{12}}{c_2 D}E,\\
c_{2,x}&=-\frac{z_2(1+c_1g_{11})-z_1c_1g_{12}}{c_1 D}E,
\end{split}
\end{equation}
subject to the boundary conditions~\eqref{eq:genPB_nondimensional_BC} and the mass constraints~\eqref{eq:chargeConservation}
where~$E:=\phi_x$ and~$D=D(c_1,c_2;g_{11},g_{12},g_{22})$ is the determinant of the Hessian of~$h$ 
\begin{equation}\label{eq:D}
D=|\mbox{Hessian}_{{\bf c}}(h)|=(1/c_1+g_{11})(1/c_2+g_{22})-g_{12}^2=\frac1{c_1c_2}+\frac{g_{11}}{c_1}+\frac{g_{22}}{c_2}+|{\bf G}|.
\end{equation}
\end{subequations}

In contrast to the convex case, the solutions profiles~${\bf c}(\phi)=\{c_1(\phi,{\bf \bar{c}}),c_2(\phi,{\bf \bar{c}})\}$ may not be functions of~$\phi$.  Therefore, rather than focusing on~${\bf c}(\phi)$, we  take a different approach, and consider the solutions trajectories in the~$(c_1,c_2)$ plane.  By equations~\eqref{eq:cix}, a solution trajectory passing through a point~$(c_1^0,c_2^0)$ satisfies
\begin{equation}\label{eq:Gamma_c}
\frac{dc_1}{dc_2}=\frac{z_1\left(\frac1{c_2}+g_{22}\right)-z_2g_{12}}{z_2\left(\frac1{c_1}+g_{11}\right)-z_1g_{12}},\qquad
c_1(c_2^0)=c_1^0.
\end{equation}
The following Lemma characterizes the solution trajectories.
\begin{lemma}\label{lem:dc1dc2}
Let~${\bf c}_0=(c_1^0,c_2^0)\in R_+^2$m and let~$z_1,z_2\in R$ that satisfy~\eqref{eq:valence}.
Then,
\begin{enumerate}
\item There exists a unique solution trajectory~$\Gamma_{{\bf c}_0}$, defined by~\eqref{eq:Gamma_c}, that passes through~${\bf c}_0$, and which remains in the quarter plane~$R_+^2$.
\item The solution trajectory~$\Gamma_{{\bf c}_0}$ intersects with the line~$\{{\bf c}|E_x=0\}$ at a single point.  
\item Let~$D({\bf c}_0)$ be defined by~\eqref{eq:D}.  If~$D({\bf c}_0)<0$, then the solution trajectory intersects with the curve~$\{{\bf c}|D=0\}$ at least at two points.
\end{enumerate}
\end{lemma}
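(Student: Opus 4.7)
The plan is to exploit the sign structure of the right-hand side of~(\ref{eq:Gamma_c}): under~(\ref{eq:valence}) and $g_{ij}\ge 0$, the numerator $A(c_2):=z_1(1/c_2+g_{22})-z_2 g_{12}$ is strictly positive and the denominator $B(c_1):=z_2(1/c_1+g_{11})-z_1 g_{12}$ is strictly negative throughout $R_+^2$, so the ratio $A/B$ is smooth and strictly negative. The whole argument then reduces to monotonicity combined with intermediate-value arguments for a scalar function of $c_2$, once the qualitative geometry of the trajectory is pinned down.

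For part~(1), the smoothness of $A/B$ on $R_+^2$ gives local existence and uniqueness of the trajectory $c_1=f(c_2)$ through ${\bf c}_0$ by Cauchy--Lipschitz, and the strict sign of $A/B$ makes $f$ strictly decreasing. To extend $f$ to all of $(0,\infty)$, I would verify that the trajectory cannot escape the open quadrant at any finite $c_2>0$: as $c_2\to 0^+$, $A\sim z_1/c_2$ blows up while $B$ stays bounded, so integrating the ODE shows $f(c_2)\to\infty$ only in the limit $c_2\to 0^+$ and not at any positive $c_2$. Symmetrically, switching the role of the variables to $dc_2/dc_1=B/A$ with $B\sim z_2/c_1$ as $c_1\to 0^+$ shows $c_2\to\infty$ only as $c_1\to 0^+$. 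Thus $f$ is a strictly decreasing function on $(0,\infty)$ with $f(0^+)=\infty$ and $f(\infty)=0$.

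For part~(2), the set $\{{\bf c}\mid E_x=0\}$ is the affine line $c_1=g(c_2):=\bar c_1+(z_2/z_1)(\bar c_2-c_2)$, whose slope $-z_2/z_1>0$ in the $(c_2,c_1)$-plane is strictly positive by~(\ref{eq:valence}). Consequently $f-g$ is strictly decreasing on $(0,\infty)$ with $(f-g)(0^+)=+\infty$ (since $g$ is bounded there) and $(f-g)(\infty)=-\infty$ (since $g(c_2)\to+\infty$ and $f(c_2)\to 0$). The intermediate value theorem then yields exactly one zero, and this zero lies in $R_+^2$ because $f>0$ throughout $(0,\infty)$.

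For part~(3), I would apply the intermediate value theorem to $\varphi(c_2):=D(f(c_2),c_2)$, which is continuous on $(0,\infty)$ and satisfies $\varphi(c_2^0)=D({\bf c}_0)<0$. It is enough to show $\varphi(c_2)\to+\infty$ both as $c_2\to 0^+$ and as $c_2\to\infty$, since this forces at least two sign changes. From~(\ref{eq:D}), the term $g_{22}/c_2$ already supplies the blow-up as $c_2\to 0^+$ whenever $g_{22}>0$; in the borderline case $g_{22}=0$, the hypothesis $|{\bf G}|=g_{11}g_{22}-g_{12}^2<0$ forces $g_{12}>0$, and the asymptotic analysis of part~(1) then gives that $f(c_2)$ blows up only logarithmically in $1/c_2$, so that $1/(f(c_2)\,c_2)\to+\infty$ dominates the constant $|{\bf G}|$. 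The symmetric argument handles $c_2\to\infty$. The main obstacle throughout is exactly this endpoint asymptotic analysis of $f$, and in particular the careful handling of the degenerate cases where some $g_{ij}$ vanishes, since one must identify which term of $D$ provides the dominant positive divergence.
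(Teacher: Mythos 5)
Your proposal is correct and follows essentially the same route as the paper: establish that the slope in~\eqref{eq:Gamma_c} is strictly negative, deduce a globally defined strictly decreasing trajectory with $c_1\to\infty$ as $c_2\to 0^+$ and $c_1\to 0$ as $c_2\to\infty$, and then obtain parts (2) and (3) by monotonicity and the intermediate value theorem applied along the trajectory. The only cosmetic difference is that the paper packages the endpoint asymptotics as explicit two-sided Gronwall bounds $c_1^0e^{m(c_2-c_2^0)}\le c_1(c_2)\le c_1^0e^{M(c_2-c_2^0)}$ with $m,M<0$, whereas you argue the same limits by local asymptotic analysis near the boundary of the quadrant (and are, if anything, slightly more explicit about the degenerate cases $g_{ij}=0$).
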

\begin{proof}
Since~$z_1>0>z_2$, see~\eqref{eq:valence}, the right-hand-side of~\eqref{eq:Gamma_c} is strictly negative.  Therefore,~$c_1\le c_1^0$ for~$c_2\ge c_2^0$, implying that
\begin{equation}\label{eq:bound_dc1_dc2}
m\,c_1\le  \frac{dc_1}{dc_2}\le  M c_1<0,\qquad c_2>c_2^0,\qquad m=\frac{z_1}{z_2}\left(\frac1{c_2^0}+g_{22}\right)-g_{12},\quad M=\frac{z_1 g_{22}-z_2g_{12}}{z_2\left(1+g_{11}c_1^0\right)-z_1g_{12}c_1^0}.
\end{equation}

\begin{enumerate}
\item By~\eqref{eq:bound_dc1_dc2} and the fundamental theory of ODE, Equation~\eqref{eq:Gamma_c} admits a smooth monotonically decreasing solution for any initial condition~$c_1(c_2^0)=c_1^0$.  Further by Gronwell's Lemma, for~$c_2\ge c_2^0$, the solution is bounded by
\begin{equation}\label{eq:bound_tail}
0<c_1^0 \exp[m(c_2-c_2^0)]\le c_1(c_2)\le c_1^0 \exp[M(c_2-c_2^0)].
\end{equation}
The solution is monotonically decreasing, hence~$c_1(c_2)\ge c_1^0$ for~$c_2<c_2^0$.  Therefore,~$c_1(c_2)>0$ for all~$c_2>0$.
\item The curve~$\Gamma_E(c_1)$ along which~$E_x=0$ is given by the strictly monotonically increasing function, see~\eqref{eq:E}, 
\[
c_1=\bar{c}_1-\frac{z_2}{z_1}(c_2-\bar{c}_2).
\]
Thus,~$c_1(c_2)-\Gamma_E(c_2)$ is a strictly monotonically decreasing function.
By~\eqref{eq:bound_tail} and since~$m,M<0$,
\begin{equation}\label{eq:c2_inf}
\lim_{c_2\to \infty} c_1(c_2)=0,
\end{equation} implying that 
\[
\lim_{c_2\to \infty}c_1(c_2)-\Gamma_E(c_2)=-\infty.
\]
 Applying the same arguments to the inverse function~$c_2(c_1)$ yields that~$\lim_{c_2\to 0^+} c_1(c_2)=\infty$.
 \[
\lim_{c_2\to 0^+}c_1(c_2)-\Gamma_E(c_2)=\infty.
\]
Therefore, the solution trajectory~$\Gamma_{{\bf c}_0}$ intersects with the line~$E_x=0$ at a single point.
\item By~\eqref{eq:c2_inf},~$\lim_{c_2\to \infty} D(c_1(c_2),c_2)=\infty$.  Therefore, continuity of~$D(c_1,c_2)$ implies that the solution trajectory~$\Gamma_{{\bf c}_0}$ intersects with the curve~$\{{\bf c}|D=0\}$ at a point on~$\Gamma_{{\bf c}_0}$ for which~$c_2^0<c_2<\infty$.  Similarly, the limit~$\lim_{c_2\to 0^+} c_\Gamma(c_2)=\infty$ implies that the solution trajectory~$\Gamma_{{\bf c}_0}$ intersects with the curve~$D=0$ at a point on~$\Gamma_{{\bf c}_0}$ for which~$0<c_2<c_2^0$.
\end{enumerate}
\end{proof}
Figure~\ref{fig:solutionTrajectory} presents numerous solution trajectories in the~$(c_1,c_2)$ plane  corresponding to different solutions of the same PB-steric system~\eqref{eq:stericPB}.  As follows from Lemma~\ref{lem:dc1dc2}, the trajectories are monotonically decreasing functions of~$c_2$ that intersect once with the line~$E_x=0$.  The significance of the intersection with the line~$E_x=0$ stems from the fact that the solution is locally electro-neutral at this point, and therefore the solution is expected to reach this point in the limit of zero applied voltage or sufficiently far from the charged boundaries.  
 \begin{figure}[ht!]
\begin{center}
\scalebox{0.9}{\includegraphics{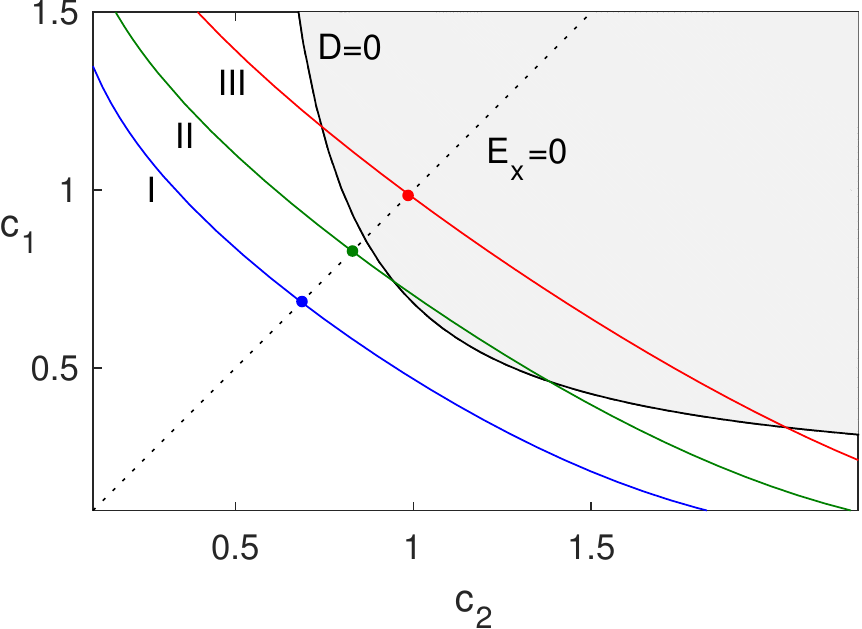}}
\end{center}
\caption{Three different solution trajectories in the~$(c_1,c_2)$ plane defined by~\eqref{eq:Gamma_c} with~$z_1=1=-z_2$,~$g_{11}=2.25$,~$g_{22}=0.75$ and~$g_{12}=2.5$. The dotted line is $\{{\bf c}|E_x=0\}$, and the dash-dotted curve is~$\{{\bf c}|D=0\}$.  Shaded region is the region~$\{{\bf c}|D<0\}$.   
}\label{fig:solutionTrajectory}
\normalsize
\end{figure}
The shaded region presented in Figure~\ref{fig:solutionTrajectory} is the region in which~$D(c_1,c_2)<0$.  As will be shown, the sign of~$D$ along the solution trajectory dictates key 
properties of the solutions.  Indeed, differentiating~\eqref{eq:E} by~$x$ and substituting~\eqref{eq:cix} in the resulting equation yields
\begin{equation}\label{eq:Exx}
E_{xx}=-z_1c_{1,x}-z_2c_{2,x}=\frac{a(x)}{D(c_1(x),c_2(x))} E,\quad a(x)=z_1^2(1/c_2+g_{22})+z_2^2(1/c_1+g_{11})-2z_1z_2g_{12}.
\end{equation}
Classical ODE theory implies that the nature of solutions of this second-order linear ODE - oscillatory or not - depends on the sign of the coefficient of~$E$.  Particularly, the sign of~$D(x)$ dictates the sign of the coefficient of~$E$ since the coefficient~$a(x)$ in equation~\eqref{eq:Exx} is strictly positive for all~$x$.

Let us distinguish between types of solution trajectories by the sign of~$D$ along the trajectory, and at the intersection point with the line~$E_x=0$: 
\begin{itemize} \item {\bf Type I} trajectories correspond to trajectories that remain in the region~$\{{\bf c}|D>0\}$, i.e., they do not intersect with the curve~$D=0$.  
\item {\bf Type III} trajectories correspond to trajectories for which~$D<0$ in a surrounding of the intersection point with the line~$E_x=0$, and thus intersect with the curve~$D=0$. 
\item Other trajectories are denoted {\bf type II} trajectories.  
\end{itemize}
Figure~\ref{fig:solutionTrajectory} presents an example of each of the trajectory type. 
\subsection{Type I solution trajectories}\label{sec:typeI}
In this section, we characterize type I solutions, and show that they qualitatively behave as solutions of generalized PNP systems in the case of convex~$h$.   
Particularly, all solutions of the PNP-steric system~\eqref{eq:stericPNP} in the convex case,~$|{\bf G}|>0$, are type I solutions, since~$D>0$ for all~$(c_1,c_2)$.  Similarly, in the dilute case,~$c_i(x)\ll1$, even if~$h$ in non-convex, the solution trajectories are type I trajectories.  This can be expected since in the dilute case the PB-steric system~\eqref{eq:stericPB} reduces at leading order to the Poisson-Boltzmann system, i.e., to the system~\eqref{eq:stericPB} with~${\bf G}\equiv 0$.  
Lemma~\ref{lem:Gpsd} implies that the concentrations of solutions corresponding to type I trajectories are bounded by~$(c_1,c_2)\in [0,-1/\lambda_-]\times [0,-1/\lambda_-]$ where~$\lambda_-$ is the negative eigenvalue of~${\bf G}$.  The following Lemma quantifies a minimal regime of solution concentrations for which solutions correspond to type I solutions.
\begin{lemma}
Let~${\bf c}_0=(c_1^0,c_2^0)\in (0,c_1^{\rm bound})\times (0,c_2^{\rm bound})$ where
\[
c_1^{\rm bound}=\frac{g_{22}}{g_{12}^2-g_{11}g_{22}},\quad c_2^{\rm bound}=\frac{g_{11}}{g_{12}^2-g_{11}g_{22}},
\]
and let~$g_{11},g_{22},g_{12}>0$ such that~$g_{11}g_{22}-g_{12}^2<0$.
Then, the solution trajectory given by~\eqref{eq:Gamma_c} satisfies for all~$x$,
\[
D(c_1(x),c_2(x))>0.
\]
\end{lemma}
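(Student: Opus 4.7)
The plan is a two-step argument: first, a short algebraic step identifies two half-strips in the $(c_1,c_2)$-plane on which $D>0$ holds unconditionally; second, I will use the monotonicity of the trajectory established in Lemma~\ref{lem:dc1dc2} to confine $\Gamma_{{\bf c}_0}$ to the union of those two strips, and thereby conclude.

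For the algebraic step, set $\alpha := g_{12}^2 - g_{11}g_{22}>0$, expand $D=(1/c_1+g_{11})(1/c_2+g_{22})-g_{12}^2$, and multiply through by $c_1 c_2>0$ to rewrite the inequality $D>0$ equivalently as
\[
\alpha\, c_1 c_2 \;<\; 1 + g_{11}\, c_1 + g_{22}\, c_2.
\]
If $c_1<c_1^{\rm bound}=g_{22}/\alpha$, then $\alpha c_1 c_2 < g_{22}\, c_2$, which is dominated by the right-hand side; symmetrically, $c_2<c_2^{\rm bound}=g_{11}/\alpha$ forces $\alpha c_1 c_2 < g_{11}\, c_1$, again dominated. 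Hence both half-strips $S_1:=\{c_1<c_1^{\rm bound}\}$ and $S_2:=\{c_2<c_2^{\rm bound}\}$ are contained in the set $\{D>0\}$.

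For the geometric step, I parametrize $\Gamma_{{\bf c}_0}$ by $c_2$ and invoke Lemma~\ref{lem:dc1dc2}, by which $c_1(c_2)$ is smooth and strictly decreasing along the trajectory. For $c_2\ge c_2^0$ this yields $c_1(c_2)\le c_1^0<c_1^{\rm bound}$, placing the trajectory in $S_1$; for $c_2\le c_2^0$ one directly has $c_2\le c_2^0<c_2^{\rm bound}$, placing it in $S_2$. In either case the first step delivers $D(c_1(x),c_2(x))>0$ for every $x$, which is exactly the claim.

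I do not expect any serious obstacle: the proof reduces to one line of algebra plus the monotonicity already in hand from Lemma~\ref{lem:dc1dc2}. The only point requiring care is the index bookkeeping in the expansion of $D$ — the cross-terms of $(1/c_1+g_{11})(1/c_2+g_{22})$ produce $g_{22}/c_1+g_{11}/c_2$, which is precisely what matches up with the stated ``swapped-subscript'' definitions $c_1^{\rm bound}=g_{22}/\alpha$ and $c_2^{\rm bound}=g_{11}/\alpha$.
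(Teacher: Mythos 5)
Your proposal is correct and follows essentially the same route as the paper: establish that $c_1<c_1^{\rm bound}$ or $c_2<c_2^{\rm bound}$ each force $D>0$, then use the monotone decrease of $c_1(c_2)$ from Lemma~\ref{lem:dc1dc2} to place the whole trajectory in one of those two half-strips. The only difference is that you make explicit the algebra the paper dismisses as ``easy to verify,'' correctly working from the factored form of $D$ (whose cross-terms are $g_{22}/c_1+g_{11}/c_2$, matching the swapped-subscript bounds).
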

\begin{proof}
It is easy to verify from that if~$c_1<c_1^{\rm bound}$ or~$c_2<c_2^{\rm bound}$ then~$D>0$, see~\eqref{eq:D}.
By Lemma~\ref{lem:dc1dc2},~$dc_1/dc_2<0$.  Thus, if~$c_2>c_2^0$ then~$c_1<c_1^0 <c_1^{\rm bound}$ and~$D>0$.  Otherwise,~$D>0$ since~$c_2 \le c_2^0<c_2^{\rm bound}$.  
\end{proof}

Figure~\ref{fig:typeI}A presents an example of the concentration profiles of  a type I solution.  The solution trajectory, corresponding to the solution of Figure~\ref{fig:typeI}A is presented in Figure~\ref{fig:typeI}B (thick solid curve). In addition, Figure~\ref{fig:typeI}B presents the trajectory~$\Gamma_{\bf \bar c}$ as defined by~\eqref{eq:Gamma_c} for any~$c_2$ (dashed curve).  As expected, the trajectory~$\Gamma_{\bf \bar c}$ remains in the region~$D>0$ for all~$c_2$ implying that~$D(c_1(x),c_2(x))>0$ for any applied potential~$V=\phi_L-\phi_{-L}$,~$\phi_L\phi_{-L}<0$.    
\begin{figure}[ht!]
\scalebox{0.9}{\includegraphics{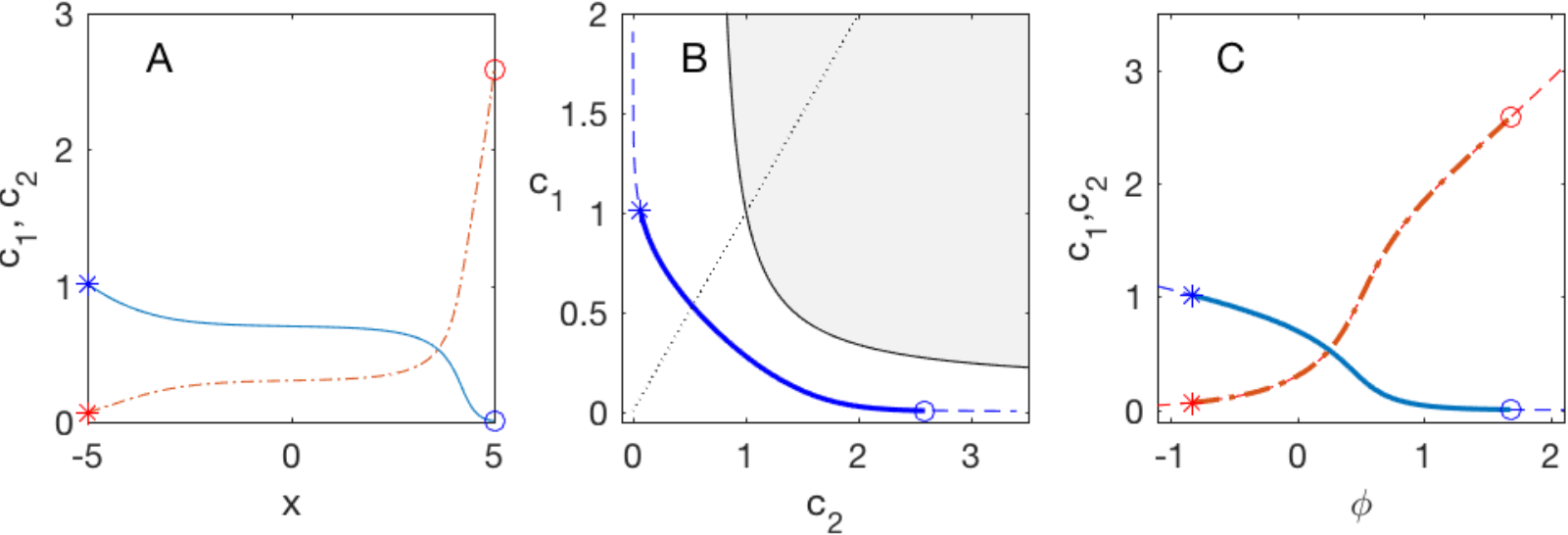}}
\caption{Solutions of the PB-steric system\protect\footnotemark[2]~\eqref{eq:stericPB} in the domain~$[-L,L]$ where~$L=5$,~$z_1=1=-z_2$,~$g_{11}=3.4$,~$g_{22}=0.6$,~$g_{12}=2.65$,~$\rho_0=0$,~$\phi_{-L}=-0.81$,~$\phi_L=1.67$,~$\bar{c}_1=0.65$, and~$\bar{c}_2=0.42$.
A: Solution profiles~$c_1(x)$ ({\color{blue} solid}) and~$c_2(x)$ ({\color{red} dash-dot}). 
B:   Solution profile in the~$(c_1,c_2)$ plane ({\color{blue} solid}), solution trajectory ({\color{blue} dashes}), and the curve~$\{{\bf c}|E_x=0\}$ (dotted).   C: Solution profiles~$c_1(\phi)$ ({\color{blue} solid}) and~$c_2(\phi)$ ({\color{red} dash-dot}). Dashed curves are the functions~$\{c_i(\phi)\}$ defined by the steric Boltzmann equations~\eqref{eq:genPB_Boltzmann_nondimensional}.  The profiles values at points~$x^*=-L$ and~$x^\circ=L$ are marked by $*$ and $\circ$ markers, respectively, in all graphs.}
\label{fig:typeI} 
\end{figure}
Intuitively, type I solutions remain in a region in which the free energy is locally convex, are therefore they are expected to behave as solutions in the convex case.  
For example, as in the convex case, the generalized Boltzmann system~\eqref{eq:genPB_Boltzmann_nondimensional} defines unique functions of~$\phi$, see, e.g., Figure~\ref{fig:typeI}C presenting the profiles~$c_i(\phi)$ which correspond to the solution of Figure~\ref{fig:typeI}A.  
In addition, as in the convex case, at the limit of zero applied voltage~$\phi_{\pm L}\to0$, type I solutions, subject to the constraint~\eqref{eq:chargeConservation}, tend to the homogenous bulk state~${\bf c}\equiv(\bar{c}_1,\bar{c}_2)$.
\begin{lemma}\label{lem:uniform_D>0}
Let~${\bf c}_0\equiv(\bar{c}_1,\bar{c}_2)$ be a homogenous state satisfying local electroneutrality
\begin{equation}\label{eq:fixedPoints}
{\bf z}^T{\bf c}_0+\rho_0=0,
\end{equation}
and 
\begin{equation}\label{eq:Dpositive_at_c0}
D({\bf c}_0)>0.
\end{equation}
Then,~${\bf c}_0$ is linearly stable under steric-Poisson-Nernst-Planck dynamics~\eqref{eq:stericPNP}.
\end{lemma}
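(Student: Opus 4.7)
The plan is to linearize the PNP-steric dynamics~\eqref{eq:stericPNP} about the constant state~${\bf c}_0=(\bar c_1,\bar c_2)$ and show that every admissible perturbation mode decays, with the decay rates controlled by the Hessian of~$h$ at~${\bf c}_0$.  The homogeneous state is indeed a stationary solution: by~\eqref{eq:fixedPoints} Poisson's equation admits the solution $\phi\equiv0$ (consistent with the reference choice $\phi_{\pm L}=0$), and the constancy of~${\bf c}$ makes each flux $J_i$ vanish identically.

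Setting $c_i=\bar c_i+\tilde c_i$ and $\phi=\tilde\phi$, the linearized system reads
\begin{equation*}
\partial_t \tilde{\bf c} = \mbox{diag}(\bar{\bf c})\,\partial_{xx}\!\left[\mbox{Hessian}_{{\bf c}}(h)\big|_{\bar{\bf c}}\,\tilde{\bf c} + {\bf z}\,\tilde\phi\right], \qquad \partial_{xx}\tilde\phi=-{\bf z}^T \tilde{\bf c},
\end{equation*}
after using $\mbox{Hessian}(h)=\mbox{diag}(1/c_i)+{\bf G}$ together with the identity~\eqref{eq:Hessian_G_relation}.  Expanding $\tilde{\bf c}(x,t)=e^{\sigma t}\hat{\bf c}\,\varphi_k(x)$ in eigenfunctions~$\varphi_k$ of $\partial_{xx}$ compatible with the linearized no-flux boundary conditions, and invoking the mass-conservation constraint~$\int\tilde c_i\,dx=0$ to exclude the $k=0$ mode, I would solve the linearized Poisson equation for $\hat\phi=k^{-2}{\bf z}^T \hat{\bf c}$ and reduce the stability question to the algebraic eigenvalue problem
\begin{equation*}
\sigma\,\hat{\bf c} = -\mbox{diag}(\bar{\bf c})\,B(k)\,\hat{\bf c}, \qquad B(k):=k^2\,\mbox{Hessian}_{{\bf c}}(h)\big|_{\bar{\bf c}} + {\bf z}{\bf z}^T.
\end{equation*}

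The heart of the argument is then to show that $B(k)$ is positive definite for every $k\neq 0$.  The hypothesis $D({\bf c}_0)>0$ is precisely $\det\mbox{Hessian}(h)|_{\bar{\bf c}}>0$, and the diagonal entries $\partial^2_{c_i c_i}h=1/\bar c_i+g_{ii}$ are strictly positive, so Sylvester's criterion makes $\mbox{Hessian}(h)|_{\bar{\bf c}}$ positive definite.  Adding the positive semi-definite rank-one matrix ${\bf z}{\bf z}^T$ preserves positive definiteness of $B(k)$ for all $k\neq 0$.  Since $\mbox{diag}(\bar{\bf c})\,B(k)$ is similar to the symmetric positive-definite matrix $\mbox{diag}(\sqrt{\bar{\bf c}})\,B(k)\,\mbox{diag}(\sqrt{\bar{\bf c}})$ via conjugation by $\mbox{diag}(\sqrt{\bar{\bf c}})$, its spectrum is real and strictly positive, forcing $\mathrm{Re}\,\sigma<0$ in every non-trivial mode.

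The main obstacle is the mismatch between the no-flux boundary condition on~$\tilde c_i$ and the Dirichlet condition $\tilde\phi(\pm L)=0$, which prevents a single trigonometric basis from simultaneously diagonalizing both equations on~$[-L,L]$.  I would address this by eliminating~$\tilde\phi$ through the Green's function of the linearized Poisson operator, which converts the problem into a self-adjoint flow on the mass-conserving subspace whose quadratic form decomposes orthogonally and is controlled mode by mode by~$B(k)$; the dispersion analysis above then delivers a uniform spectral gap and hence linear stability of~${\bf c}_0$.
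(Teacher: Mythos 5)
Your proposal is correct and follows essentially the same route as the paper: both linearize about the homogeneous state, reduce to the algebraic eigenvalue problem for $-\mbox{diag}(\bar{\bf c})\left[k^2\,\mbox{Hessian}_{\bf c}(h)+{\bf z}{\bf z}^T\right]$, deduce positive definiteness of the Hessian from $D({\bf c}_0)>0$ together with the positive diagonal entries, and dispose of the $k=0$ direction via the mass-conservation constraint. Your explicit symmetrization by $\mbox{diag}(\sqrt{\bar{\bf c}})$ to justify that the spectrum is real and negative is a welcome refinement of a step the paper passes over quickly, but it does not constitute a different method.
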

\begin{proof}
Consider a perturbation of the homogenous state~${\bf c}_0$
\[
{\bf c}={\bf c}_0+{\bf \bm\delta}e^{ikx+\mathbf{\lambda} t},
\]
that satisfies the constraint~\eqref{eq:chargeConservation}.
Then,~$\mathbf{\lambda}$ satisfies
\begin{equation}\label{eq:disp_rel}
{\bf A}\delta=-\lambda(k){\bm\delta},\quad {\bf A}(k)=\mbox{diag}({\bf \bar c})\left[k^2\mbox{Hessian}_{{ \bf c}}(h)+{\bf z}{\bf z}^T\right].
\end{equation}
By~\eqref{eq:D} and~\eqref{eq:Dpositive_at_c0},~$\mbox{Hessian}_{{ \bf c}}(h)$ is a positive definite matrix, while~${\bf z}{\bf z}^T$ is a positive semi-definite matrix.  Therefore, for~$k\ne0$,~${\bf A}$ is positive semi-definite.  This implies that~$\lambda(k)<0$ for~$k\ne0$.  
When~$k=0$, the above system has one eigenvector~${\bm \delta}_1=\mbox{diag}({\bf \bar c}){\bf z}$ with corresponding eigenvalue
\[
\lambda_1=-{\bf z}^T\mbox{diag}({\bf \bar c}){\bf z}<0
\]
and one eigenvector~${\bm \delta}_2=(-z_2,z_1)^T$ with corresponding zero eigenvalue~$\lambda=0$.  The latter eigenvalue reflects changes in bulk concentration that maintain local electro-neutrality~\eqref{eq:fixedPoints}, but violates~\eqref{eq:chargeConservation}.  Therefore,~${\bm \delta}_2\perp {\bm \delta}$.
\end{proof}

 \subsection{Type III solution trajectories}\label{sec:typeIII}
In what follows, we will focus on the case of a concentrated solution such that 
\begin{equation}\label{eq:concentratedSolution}
D(\bar{c}_1,\bar{c}_2)<0,
\end{equation}
where~${\bar c}_i$ is the average concentration of each ionic species,
see~\eqref{eq:chargeConservation}.  

We first show that in this case the PNP-steric system~\eqref{eq:stericPNP} in free space ($L\to\infty$) admits multiple stationary solutions.  To do so, it is useful to consider the related initial value problem~\eqref{eq:StericPB_inverse} for~$x>0$ with initial conditions
\begin{equation}\label{eq:IC}
c_1(0)=c_1^0,\quad c_2(0)=c_2^0,\quad E(0)=0,
\end{equation}
rather than the boundary value problem~(\ref{eq:StericPB_inverse},\ref{eq:genPB_nondimensional_BC}).
Solutions of the initial value problem~(\ref{eq:StericPB_inverse},\ref{eq:IC}) for~$x\ge 0$, extend to the whole domain via the symmetries~$E(x)\to -E(-x)$ and~$c_i(x)\to c_i(-x)$.  
When~$ c_i^0=\bar{c}_i$,~$i=1,2$, the initial value problem~(\ref{eq:StericPB_inverse},\ref{eq:IC}) has the trivial solution~$c_i(x)\equiv \bar{c}_i$,~$i=1,2$.
We now show that this system also admits periodic solutions:  
\begin{theorem}[Existence of periodic solutions]\label{lem:periodic_solutions}
Let~${\bar{\bf c}}=(\bar{c}_1,\bar{c}_2)\in R_+^2$ such that
\begin{equation}\label{eq:D<0}
D(\bar{c}_1,\bar{c}_2;g_{11},g_{22},g_{12})<0.
\end{equation}
Then, the Poisson-Boltzmann-steric initial value problem~(\ref{eq:StericPB_inverse},\ref{eq:IC},\ref{eq:valence}) admits smooth periodic solutions.
\end{theorem}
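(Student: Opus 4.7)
The plan is to reduce the Poisson-Boltzmann-steric IVP to a two-dimensional autonomous system on the trajectory $\Gamma_{\bar{\bf c}}$, exhibit a first integral that endows this system with a Hamiltonian structure, and then produce a family of small-amplitude closed orbits by showing that the trivial equilibrium is a nondegenerate center precisely when $D(\bar{\bf c})<0$.

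By Lemma~\ref{lem:dc1dc2}, the trajectory $\Gamma_{\bar{\bf c}}$ through $\bar{\bf c}$ defined by~\eqref{eq:Gamma_c} is smooth, strictly monotonically decreasing, and confined to $\mathbb{R}_+^2$, so I parametrize it by $s:=c_2$, writing $c_1=c_1(s)$. Substituting into~\eqref{eq:cix} and~\eqref{eq:E} yields the planar system $s_x=f(s)\,E$, $E_x=g(s)$, with
\begin{equation*}
f(s)=-\frac{z_2(1+c_1(s)g_{11})-z_1 c_1(s)g_{12}}{c_1(s)\,D(c_1(s),s)},\qquad g(s)=-z_1(c_1(s)-\bar c_1)-z_2(s-\bar c_2),
\end{equation*}
smooth near $s=\bar c_2$ because~\eqref{eq:D<0} keeps $D$ bounded away from zero there. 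Eliminating $x$ gives $E\,dE=(g(s)/f(s))\,ds$, which integrates to the first integral
\begin{equation*}
\mathcal H(s,E)=\tfrac12 E^2-V(s),\qquad V(s)=\int_{\bar c_2}^{s}\frac{g(\tau)}{f(\tau)}\,d\tau,
\end{equation*}
and the point $(\bar c_2,0)$ is an equilibrium since $g(\bar c_2)=0$ by local electroneutrality.

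Next I determine the sign of the Hessian of $\mathcal H$ at $(\bar c_2,0)$. Lemma~\ref{lem:dc1dc2} gives $dc_1/dc_2<0$, which together with~\eqref{eq:valence} yields $g'(\bar c_2)=-z_1 c_1'(\bar c_2)-z_2>0$; the hypothesis~\eqref{eq:D<0} makes $c_1(\bar c_2)\,D(\bar{\bf c})<0$, and~\eqref{eq:valence} combined with $g_{ij}\ge 0$ makes the numerator $z_2(1+c_1 g_{11})-z_1 c_1 g_{12}$ of $f(\bar c_2)$ strictly negative, so $f(\bar c_2)<0$ and
\begin{equation*}
V''(\bar c_2)=\frac{g'(\bar c_2)}{f(\bar c_2)}<0.
\end{equation*}
Hence $\mathcal H$ has a strict local minimum at $(\bar c_2,0)$ with positive-definite Hessian. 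By the Morse lemma, the level sets $\{\mathcal H=\mathcal H(\bar c_2,0)+\varepsilon\}$ for small $\varepsilon>0$ are smooth simple closed curves encircling $(\bar c_2,0)$; each is invariant under the reduced flow and is traversed as a smooth periodic orbit in $(s,E)$. Lifting back yields smooth periodic $(c_1,c_2,E)$, and the reflection symmetry $(s,E)\mapsto(s,-E)$ of each level set forces $\oint E\,dx=0$ over one period, so $\phi(x)=\phi(0)+\int_0^x E\,d\xi$ is also periodic.

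The essential use of the hypothesis $D(\bar{\bf c})<0$ is in the sign tally that produces $V''(\bar c_2)<0$: if instead $D(\bar{\bf c})>0$, the same algebra turns $(\bar c_2,0)$ into a saddle of $\mathcal H$ and no small closed orbits exist, consistent with the behavior observed in Section~\ref{sec:typeI}. The careful bookkeeping of signs in the third paragraph --- combining~\eqref{eq:valence}, $g_{ij}\ge 0$, and the hypothesis --- is the main technical hurdle, and is where the non-convexity of $h$ is genuinely used.
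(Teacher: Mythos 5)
Your proof is correct, but it takes a genuinely different route from the paper's. The paper argues by shooting and gluing: it launches two solution pieces of the initial value problem from points on either side of $\bar{\bf c}$ along the trajectory $\Gamma_{\bar{\bf c}}$, uses the signs of $D$ and of $E$ to show each piece runs monotonically into $\bar{\bf c}$ in finite $x$ where $E$ attains a local maximum, tunes the two pieces by continuous dependence on initial data so that they reach a common $E_{\max}=\min\{E^A_{\max},E^B_{\max}\}$, and then glues and reflects periodically (the construction illustrated in Figure~\ref{fig:theorem1}). You instead restrict the flow to the invariant cylinder over $\Gamma_{\bar{\bf c}}$, produce the first integral $\mathcal H=\tfrac12E^2-V(s)$, and show that $D(\bar{\bf c})<0$ forces $V''(\bar c_2)=g'(\bar c_2)/f(\bar c_2)<0$, so that $(\bar c_2,0)$ is a nondegenerate center whose nearby level curves are the periodic orbits. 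The reduction is legitimate: the chain-rule identity relating $\Gamma'(s)f(s)$ to the coefficient in the $c_1$-equation of~\eqref{eq:cix} is exactly the defining relation~\eqref{eq:Gamma_c}, so lifted orbits solve the full system and coincide with the IVP solution by uniqueness wherever $D\ne0$; and your sign bookkeeping checks out. Your route buys conceptual economy --- the whole one-parameter family of nested periodic orbits appears at once, and the center-versus-saddle dichotomy in the sign of $D(\bar{\bf c})$ makes the role of the hypothesis transparent, consistent with the oscillation criterion read off from~\eqref{eq:Exx}. The paper's route buys explicit quantitative control (finite bounds on the half-periods $x_A,x_B$, monotonicity of each piece) without needing to identify a conserved quantity, at the price of a longer continuity and matching argument. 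Two small remarks: $g(\bar c_2)=0$ holds because $\Gamma_{\bar{\bf c}}$ passes through $\bar{\bf c}$ and~\eqref{eq:E} already encodes global electroneutrality, not by a separate local-electroneutrality assumption; and the theorem only asks for periodicity of $(c_1,c_2,E)$, so your closing observation that $\oint E\,dx=0$ (which does follow, via $E\,dx=ds/f(s)$ over one circuit) is a correct bonus rather than a needed step.
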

\begin{proof}
Lemma~\ref{lem:dc1dc2} ensures that there exists a unique solution trajectory~$\Gamma_{\bar{\bf c}}=(c_1(c_2),c_2)$ 
that passes through~$\bar{\bf c}$.  
By~\eqref{eq:D<0}, and due to the continuity of~$D$ in~$R_+^2$, there exist a surrounding of~$\bar{\bf c}$ in which~$\Gamma_{\bar{\bf c}}$ does not cross~$D=0$, i.e., there exists~$\varepsilon>0$ such that
\begin{equation}\label{eq:Dnegative}
D_{\min}<D(c_1(c_2),c_2)<D_{\max}<0, \quad |c_2-\bar{c}_2|<\varepsilon.
\end{equation}
Let~$\bar{c}_2<c_2^0<\bar{c}_2+\varepsilon$.  Since~$dc_1/dc_2<0$, see Lemma~\ref{lem:dc1dc2},~$c_1^0:=c_1(c_2^0)$ satisfies
\begin{equation}\label{eq:IC0}
c_1^0<\bar{c}_1,\quad c_2^0>\bar{c}_2.
\end{equation}
\begin{figure}[ht!]
\begin{center}
\scalebox{0.6}{\includegraphics{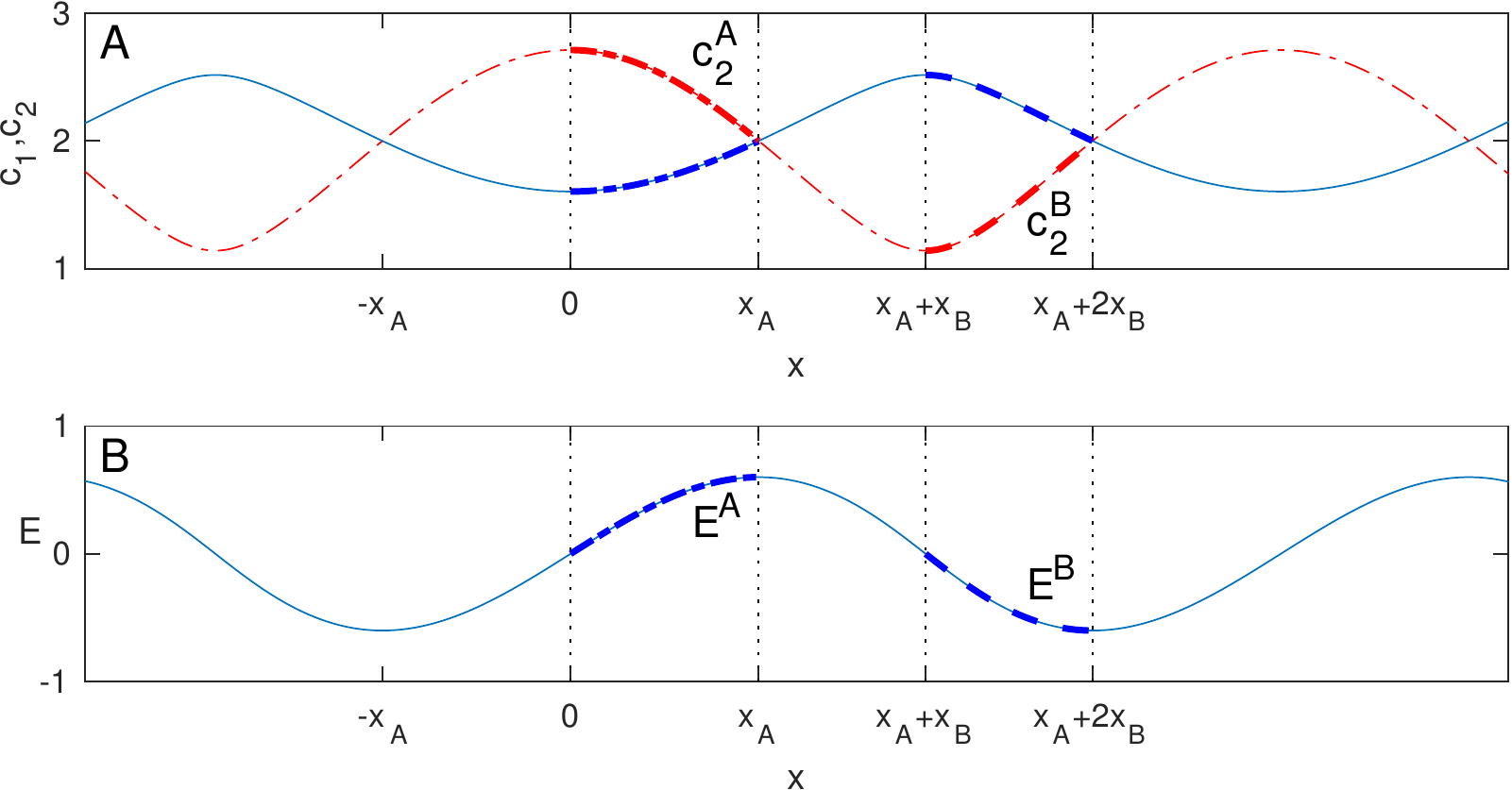}}
\end{center}
\caption{Illustration of a periodic solution of the Poisson-Boltzmann-steric system~\eqref{eq:StericPB_inverse} demonstrating the proof of Theorem~\ref{lem:periodic_solutions}.  A:~$c_1(x)$ ({\color{blue} solid}) and~$c_2(x)$  ({\color{red} dash-dots}).  B:~$E(x)$.  Thick curves in A and B correspond to the solutions~$(c_1^A,c_2^A,E^A)(x)$ (dash-dots) and~$(c_1^B,c_2^B,E^B)(x+x_A+x_B)$ (dashes) as defined in the proof of Theorem~\ref{lem:periodic_solutions}.   The two solutions extend to the whole domain via symmetries of the PB-steric~\eqref{eq:stericPB}.}
\label{fig:theorem1} 
\normalsize
\end{figure}
Denote by~$(c_1^A,c_2^A,E^A)$ the solution of~(\ref{eq:StericPB_inverse},\ref{eq:IC}).  Since~$(c_1^0,c_2^0)\in  \Gamma_{\bar{\bf c}}$, the solution trajectory corresponding to the solution~$(c_1^A,c_2^A,E^A)$ is~$\Gamma_{\bar{\bf c}}$, see Lemma~\ref{lem:dc1dc2}.
We first show that there exists a finite interval~$[0,x_A]$ where~$c_2^A(x)$ monotonically decreases from~$c_2^A(0)=c_2^0$ to~$c_2^A(x_A)=\bar{c}_2$, see Figure~\ref{fig:theorem1} for illustration.
Indeed, by~\eqref{eq:IC0},~$E_x^A(0)>0$, thus there exists a maximal surrounding~$0<x<x_A$, possibly~$x_A=\infty$, such that such tha
\begin{equation}\label{eq:positiveEx}
E_x^A(x)>0,\qquad  0<x<x_A,
\end{equation}
and~$E_x^A(x_A)=0$.   Since the solution trajectory~$\Gamma_{\bar{\bf c}}$ intersects with the line~$E_x=0$ at a single point, see Lemma~\ref{lem:dc1dc2}, then by construction~$c_i^A(x_A)=\bar{c}_i$,~$i=1,2$, and hence
 \[
c_1(x)<\bar{c}_1,\quad c_2(x)<\bar{c}_2 ,\qquad 0<x<x_A.
 \]
Furthermore, since~$D(c_1^0,c_2^0)<0$ and due to continuity of~$D$ and of the solution~$(c_1^A,c_2^A,E^A)$, there exists a maximal interval~$0<x<x_\delta\le x_A$ such that
\[
D(c_1^A(x),c_2^A(x))<0,\quad 0<x<x_\delta,
\]
and either~$D(c_1^A(x_\delta),c_2^A(x_\delta))=0$ or~$x_\delta=x_A$.
 In this surrounding,~$E^A(x)>0$, thus by~\eqref{eq:valence},
\[
c_{1,x}>0,\quad c_{2,x}<0, 
\]
see~\eqref{eq:cix}, hence
\begin{equation}\label{eq:c1c2_inequalities}
c_1^0<c_1^A(x)<\bar{c}_1,\quad c_2^0>c_2^A(x)>\bar{c}_2,\quad 0<x<x_\delta.
\end{equation}
The inequalities~\eqref{eq:Dnegative} imply that~$D<0$ for all~$ 0<x<x_\delta$.  Hence~$x_\delta=x_A$.
Finally, the inequalities~(\ref{eq:Dnegative},\ref{eq:positiveEx},\ref{eq:c1c2_inequalities}) imply that for any~$0<\varepsilon<x_A$,
\[
c_{2,x}^A(x)<M<0,\quad M=\frac{-z_2(1+\bar{c}_1g_{11})+z_1\bar{c}_1g_{12}}{c_1^0}\frac{E(\varepsilon)}{D_{\rm min}},\quad \varepsilon<x<x_A,
\]
see~\eqref{eq:StericPB_inverse}.
This implies that~$c_2^A(x)$ monotonically decreases from~$c_2^A(0)=c_2^0$ to~$c_2^A(x_A)=\bar{c}_2$ at finite time~$x=x_A<(\bar{c}_2-c_2^0)/M$.

At the point~$x=x_A$,~$E^A_x(x_A)=0$, hence~$E^A(x)$ reaches a local maximum point~$E^A(x_A)=E^A_{\max}$.
The solutions of~(\ref{eq:StericPB_inverse},\ref{eq:IC}) with the initial condition~$(c_1^0,c_2^0)\in \Gamma_{\bar \bf c}$ are parametrized by a single parameter~$c_2^0$.  When~$c_2^0=\bar{c}_2$, then~$c_2\equiv \bar{c}_2$ and~$E^A_{\max}=0$.  Continuity of the ODE solution with respect to the initial conditions implies that for any~$0\le E_{\max} \le E^A_{\max}$, there exists a solution of~(\ref{eq:StericPB_inverse},\ref{eq:IC}) with~\eqref{eq:IC0} satisfying
 \begin{equation}\label{eq:eAmax}
\tilde E(\tilde x_A)=E_{\max}.
 \end{equation}

Similarly, there exists a solution of~$(c_1^B,c_2^B,E^B)$ of~(\ref{eq:StericPB_inverse},\ref{eq:IC}) with 
\[
c_1^0>\bar{c}_1,\quad c_2^0<\bar{c}_2,\qquad (c_1^0,c_2^0)\in \Gamma_{\bar{\bf c}},
\]
such that at a point~$x=x_B<\infty$ 
\[
c_1^B(x_B)=\bar{c}_1,\quad c_2^B(x_B)=\bar{c}_2,\quad E^B(x_B)=E^B_{\max},\quad E_x^B(x_B)=0,
\]
and for any~$0\le E_{\max} \le E^A_{\max}$, there exists a solution~$(\tilde c_1^B,\tilde c_2^B,\tilde E^B)$~$(c_1^B,c_2^B,E^B)$ of~(\ref{eq:StericPB_inverse},\ref{eq:IC}) such that
 \begin{equation}\label{eq:eBmax}
\tilde E(\tilde x_B)=E_{\max}.
\end{equation}

Let~$E_{\max}=\min\{E^A_{\max},E^B_{\max}\}$, and let~$\tilde E^A$ and~$\tilde E^B$ be solutions of~(\ref{eq:StericPB_inverse},\ref{eq:IC}) satisfying~\eqref{eq:eAmax} and~\eqref{eq:eBmax}, respectively.  Then, 
\[
c_i(x)=\begin{cases}
\tilde c_i^A(x),\quad 0\le x<\tilde x_A ,\\
\tilde c_i^B(x_A+x_B-x),\quad \tilde x_A<x<\tilde x_A+\tilde x_B,\\
\end{cases}
\quad 
E=\begin{cases}
\tilde E^A(x),\quad 0\le x<\tilde x_A ,\\
 -\tilde E^B(x_A+x_B-x),\quad \tilde x_A<x<\tilde x_A+\tilde x_B,\\
\end{cases}
\]
is a smooth solution of~(\ref{eq:StericPB_inverse},\ref{eq:IC}) in the interval~$[0,L]$ where~$L=x_A+x_B$, see illustration in Figure~\ref{fig:theorem1}.
This solution is periodically extended to the whole domain by exploiting the symmetry,~$c_i(x_0-x)=c_i(x_0+x)$ where~$E(x_0)=0$, 
\[
c_i^{\rm per}(x)=\begin{cases}
c_i(x),\quad (i-1)L\le x\le iL,\qquad i \quad\mbox{is odd},\\
c_i(-x),\quad (i-1)L\le x\le iL,\qquad i \quad\mbox{is even}.\\
\end{cases}
\]
\end{proof}
Figure~\ref{fig:typeII}A presents a `periodic solution' of the PB-steric system~\eqref{eq:StericPB_inverse}.  We note that the solution is not necessarily presented in an interval in which it satisfies periodic boundary conditions.  Rather, in what follows, we refer to `periodic solutions' as solution for which there exist an interval in which they are  periodic.  As follows from Theorem~\ref{lem:periodic_solutions}, the corresponding solution trajectory lies in the domain~$\{{\bf c}|D({\bf c})<0\}$, and intersects with the line~$E_x=0$, see Figure~\ref{fig:typeII}B.  The extremum points of the solutions correspond to the end points of the solution trajectory, see~$\circ$ and~$*$ markers in Figure~\ref{fig:typeII}A and~\ref{fig:typeII}B.  The corresponding solution profiles~$c_i(\phi)$ have a regions of triple solutions, whereas the branch corresponding to the periodic solutions is bounded between two folds, see Figure~\ref{fig:typeII}C.    Particularly, the folds in~$c_i(\phi)$ correspond to points where the solution trajectory~$\Gamma_{\bar{\bf c}}$ intersects with the curve~$D=0$.  
\begin{figure}[ht!]
\begin{center}
\scalebox{0.9}{\includegraphics{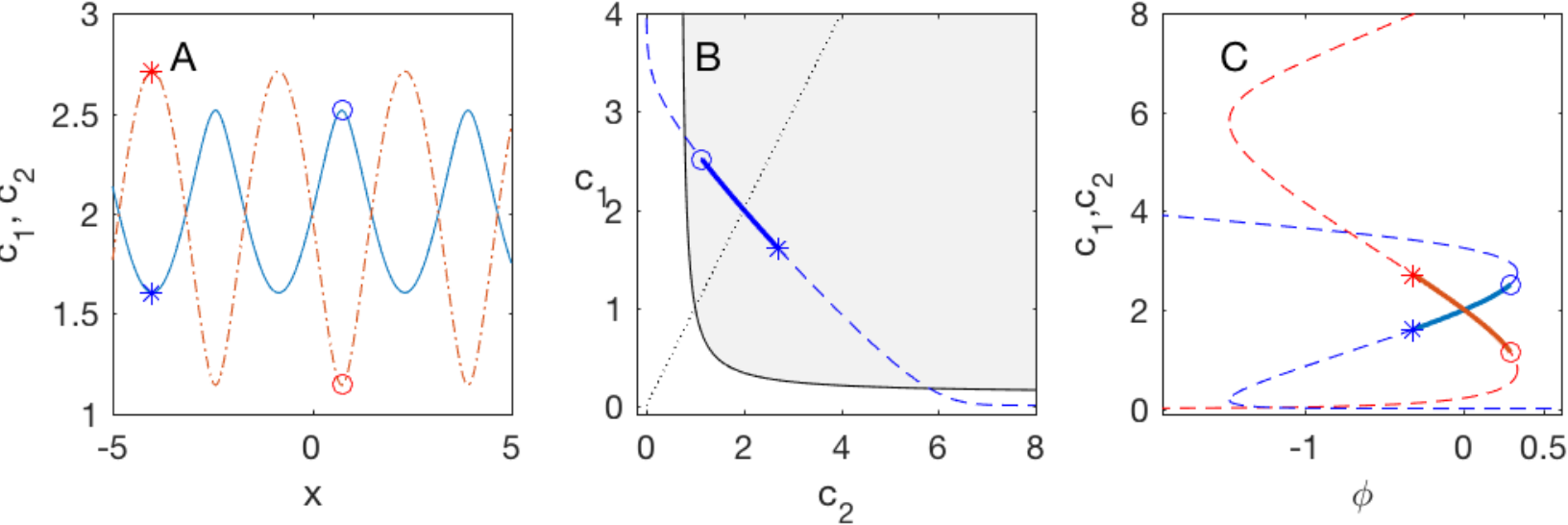}}
\end{center}
\caption{Solutions of the PB-steric system\protect\footnotemark[2]~\eqref{eq:StericPB_inverse}
in the domain~$[-L,L]$ where~$L=5$,~$z_1=1=-z_2$,~$g_{11}=3.4$,~$g_{22}=0.6$,~$g_{12}=2.65$,~$\rho_0=0$,~$\phi_{-L}=0.093$~$\phi_L=-0.19$,~$\bar{c}_1=2$, and~$\bar{c}_2=2.01$.
A: Solution profiles~$c_1(x)$ ({\color{blue} solid}) and~$c_2(x)$ ({\color{red} dash-dot}). 
B:   Solution profile in the~$(c_1,c_2)$ plane ({\color{blue} solid}), solution trajectory ({\color{blue} dashes}), and the curve~$\{{\bf c}|E_x=0\}$ (dotted).   C: Solution profiles~$c_1(\phi)$ ({\color{blue} solid}) and~$c_2(\phi)$ ({\color{red} dash-dot}). Dashed curves are the functions~$\{c_i(\phi)\}$ defined by the steric Boltzmann equations~\eqref{eq:genPB_Boltzmann_nondimensional}.  The profiles values at the maxima and minima points of~$c_2(x)$ are marked by $\circ$ and * markers, respectively, in all graphs.}
\label{fig:typeII} 
\normalsize
\end{figure}

Theorem~\ref{lem:periodic_solutions} proves the existence of periodic solutions of the initial value problem~(\ref{eq:cix},\ref{eq:Exx},\ref{eq:IC}).  These solutions maintain~$D<0$ for all~$x$.  We now show, however, that for concentrated solutions satisfying~\eqref{eq:concentratedSolution}, stationary solutions of the PNP-steric equation with large enough applied voltage~$|\phi_L-\phi_{-L}|$ can only be described by solution which cross the curve~$\{{\bf c}|D=0\}$.  Indeed, near a sufficiently charged wall,~$D>0$: Isolating~$\phi$ in the equation for~$c_1$ in~\eqref{eq:genPB_Boltzmann_nondimensional} yields
\[
z_1\phi=-\log c_1-g_{11}c_1-g_{12}c_2+\mbox{Const}.
\]
Applying the method of dominant balance for~$\phi\gg1$, yields that since~$z_1>0$ and~$c_i>0$, the only way to satisfy the above relation is if~$z_1\phi\approx-\log c_1$.  The latter implies that~$c_1\ll1$, hence~$D(c_1,c_2)>0$.  Similarly,~$D(c_1,c_2)>0$ when~$-\phi\gg1$ since~$c_2\ll1$.
\begin{lemma}\label{lem:D=0}
Under the conditions of Lemma~\ref{lem:periodic_solutions}, there exists a smooth stationary solution~$(c_1(x),c_2(x),\phi(x))$ of~\eqref{eq:stericPNP} that crosses~$D=0$, i.e.,~$D(c_1(x),c_2(x))=0$ at a point~$x\in (-L,L)$.
\end{lemma}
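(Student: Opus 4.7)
The plan is to formalize the dominant balance argument sketched in the paragraph preceding the lemma, combined with the intermediate value theorem applied to $x \mapsto D(c_1(x), c_2(x))$.

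First, I would produce a stationary solution of~\eqref{eq:stericPNP} with bulk concentration $\bar{\bf c}$ and large applied boundary voltage. Following the initial value problem formulation used in Theorem~\ref{lem:periodic_solutions} (and in the numerical scheme mentioned in the footnote), I would fix an interior point $x_0$ with $E(x_0) = 0$ and $(c_1(x_0), c_2(x_0)) = \bar{\bf c}$, and then integrate~\eqref{eq:StericPB_inverse} outward. By hypothesis~\eqref{eq:D<0}, $D(c_1(x_0), c_2(x_0)) = D(\bar{\bf c}) < 0$. Continuing the integration outward until $|\phi|$ reaches a prescribed large value $\phi_{\max}$ defines the endpoints $\pm L$ together with the boundary data $\phi_{\pm L}$, which can be made arbitrarily large in magnitude by choosing $\phi_{\max}$ large.

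Second, the dominant balance estimate from before the lemma applies at the boundary: since $\phi(L) = \phi_{\max} \gg 1$, the Boltzmann relation $z_1 \phi = -\log c_1 - g_{11}c_1 - g_{12}c_2 + \mbox{const.}$ forces $c_1(L) \ll 1$, and hence $D(c_1(L), c_2(L)) > 0$ by~\eqref{eq:D}; symmetrically at $x = -L$ when $-\phi(-L) \gg 1$. Combining this with $D(c_1(x_0), c_2(x_0)) < 0$ and the continuity of the composite map $x \mapsto D(c_1(x), c_2(x))$ along the smooth solution, the intermediate value theorem yields a point $x_* \in (-L, L)$ with $D(c_1(x_*), c_2(x_*)) = 0$.

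The main obstacle is the apparent singularity of the inverted formulation~\eqref{eq:cix} at $D = 0$: the Hessian of $h$ becomes non-invertible there, so the integration in the first step cannot be carried out through the fold using~\eqref{eq:cix} alone. The resolution is to work with the non-inverted Poisson-Boltzmann system~\eqref{eq:stericPB}, which remains regular at folds of the Boltzmann map. A direct computation shows that at $D = 0$ the tangent direction of the trajectory $\Gamma_{\bar{\bf c}}$ coincides with the kernel direction of the Hessian of $h$; this enforces $E(x_*) = 0$ at the crossing, providing the compatibility condition ensuring that the stationary solution extends smoothly through the fold. Verifying this smoothness, and ensuring the whole construction can be matched consistently across $x = x_*$, is the technical heart of the proof.
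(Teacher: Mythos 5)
Your overall strategy (produce a solution that is in the region $D<0$ in the bulk and in $D>0$ near a charged wall, then apply the intermediate value theorem) captures the heuristic given in the paragraph preceding the lemma, but it has a genuine gap, and it is not the route the paper takes. First, your launching point fails: if you set $(c_1,c_2)(x_0)=\bar{\bf c}$ and $E(x_0)=0$, the initial value problem~(\ref{eq:StericPB_inverse},\ref{eq:IC}) has the trivial solution $c_i\equiv\bar c_i$ (the homogeneous state is a fixed point, as noted just before Theorem~\ref{lem:periodic_solutions}), so $|\phi|$ never grows and you never reach the regime where dominant balance gives $D>0$. If instead you perturb to a nearby point of $\Gamma_{\bar{\bf c}}$ with $D<0$, Theorem~\ref{lem:periodic_solutions} shows the small-amplitude solutions are periodic and confined to $D<0$; for larger amplitudes the trajectory does reach $\{D=0\}$, but generically with $E\neq 0$ there, and then $c_{i,x}=O(E/D)$ blows up, so the solution cannot be continued smoothly to the wall and the IVT argument cannot be closed. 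Your kernel computation is correct as far as it goes — differentiating the Boltzmann relations gives $H({\bf c})\,{\bf c}_x=-E{\bf z}$, and at $D=0$ one needs $E{\bf z}\perp\ker H$, which forces $E=0$ since $z_1(1/c_2+g_{22})-z_2g_{12}>0$ — but this shows that $E=0$ at the crossing is \emph{necessary} for smoothness, not that it is \emph{enforced} by your construction. You have correctly identified the technical heart of the proof, but you have not supplied it.

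The paper resolves this by turning the argument around: it does not integrate from the bulk toward the wall, but instead places the initial data \emph{on} the curve $\{D=0\}$ with $E(0)=0$ (choosing $c_1^*>\bar c_1$, $c_2^*<\bar c_2$), so that the problematic ratio $E/D$ is a $0/0$ indeterminate form at $x=0$. Applying L'H\^opital's rule yields $\lim E/D=\sqrt{f(c_1^*,c_2^*)}$ provided $f(c_1^*,c_2^*)>0$, which is arranged by taking $c_1^*$ sufficiently large; hence $c_{1,x}$ and $c_{2,x}$ exist at the crossing and the solution is smooth in a neighbourhood of $x=0$, crossing $D=0$ there. Note that the lemma as proved is a local existence statement near the crossing point; it does not require (and does not establish) a global solution connecting the bulk state to a highly charged boundary, which is what your IVT scheme would need.
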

\begin{proof}
Let us consider a point~${\bf c}^*=(c_1^*,c_2^*)$ on the curve~$\{{\bf c}|D=0\}$ which satisfies~$c_1^*>\bar{c}_1$ and~$c_2^*<\bar{c}_2$.
Consider the PB-steric initial value problem~(\ref{eq:cix},\ref{eq:IC}) with~$(c_1^0,c_2^0)=(c_1^*,c_2^*)$.  Then,
\[
\lim_{c_2\to c_2^*}\frac{E}{D}=\lim_{c_2\to c_2^*}\frac{E_x}{D_x}=f(c_1^*,c_2^*)\lim_{c_2\to c_2^*}\frac{D}{E},
\]
where
\[
f(c_1,c_2)=-\frac{z_1(c_1-\bar{c}_1)+z_2(c_2-\bar{c}_2)}{[z_1(1+c_2g_{22})-z_2c_2g_{12}](1+g_{22}c_2)+[z_2(1+c_1g_{11})-z_1c_1g_{12}](1+g_{11}c_1)}c_1^2 c_2^2.
\]
Let us choose the intersection point~$(c_1^*,c_2^*)$ such that~$c_1^*$ is sufficiently large so that~$f(c_1^*,c_2^*)>0$.  In this case,
\[
\lim_{c_2\to c_2^*}\frac{E}{D}=\sqrt{f(c_1^*,c_2^*)}.
\]
This implies that~$c_{1,x}(c_1^*,c_2^*)$ and~$c_{2,x}(c_1^*,c_2^*)$ exists, and thus Equation~\eqref{eq:cix} has a smooth solution at a surrounding~$[-\varepsilon,\varepsilon]$ of~$x=0$ which crosses~$D=0$.
\end{proof}

Lemmas~\ref{lem:periodic_solutions} and~\ref{lem:D=0} prove the existence of a family of stationary solutions of the PNP-steric equation~\eqref{eq:stericPNP}.  Let us now discuss the stability of these solutions.  Clearly, solutions that cross~$\{{\bf c}|D=0\}$ are unstable.  Indeed, let us consider a smooth solution~${\bf u}$ of the initial value problem~(\ref{eq:cix},\ref{eq:Exx},\ref{eq:IC}) that crosses the curve~$\{{\bf c}|D=0\}$ at point~$(c_1^*,c_2^*)$, and let us consider a solution~${\bf u}_\delta$ of the initial value problem~(\ref{eq:cix},\ref{eq:Exx},\ref{eq:IC}) with the perturbed initial condition~$(c_1^0,c_2^0)=(c_1^*,c_2^*)+\vec\delta$ such that~$D(c_1^0,c_2^0)\ne0$.  Lemma~\ref{lem:dc1dc2} ensures that the trajectory associates with~${\bf u}_\delta$ will intersect with the curve~$\{{\bf c}|D({\bf c})=0\}$.  However, since~$E(x=0)=0$ and~$E_x(x=0)\ne 0$, then for small enough~$\vec\delta$, at the intersection point of the solution trajectory with the curve~$\{{\bf c}|D=0\}$, the solution will satisfy~$E\ne0$.  Thus, as the solution approaches the intersection point where~$D=0$, its gradient blows up.
\begin{figure}[ht!]
\begin{center}
\scalebox{0.9}{\includegraphics{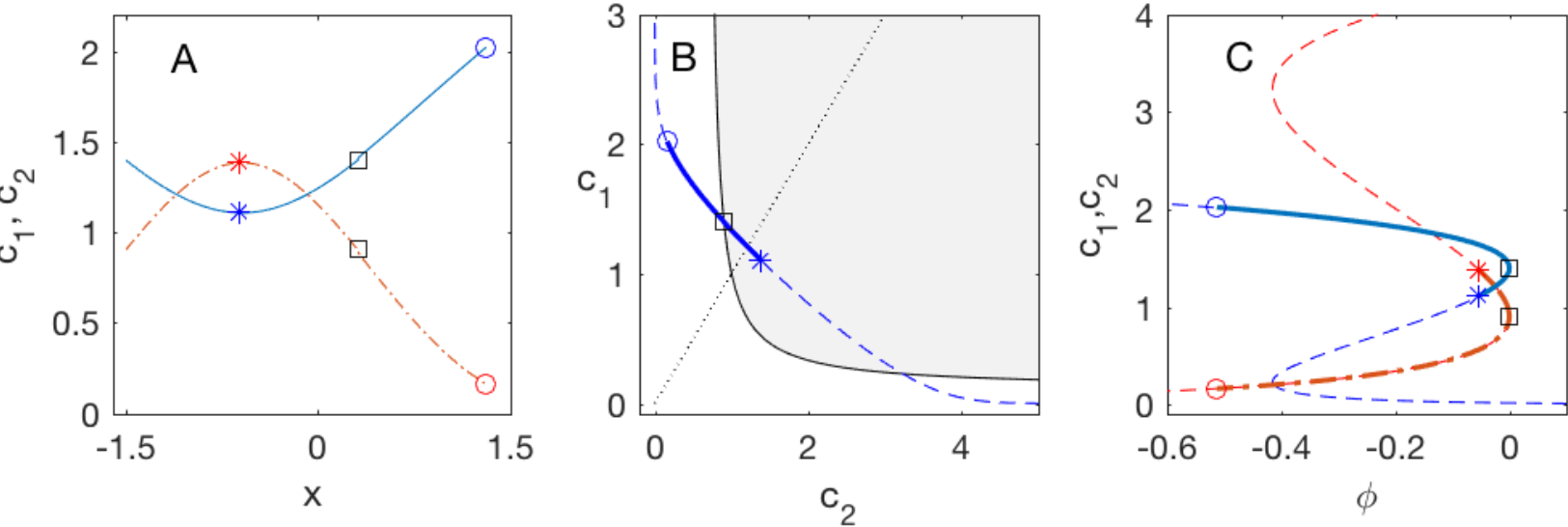}}
\end{center}
\caption{Solutions of the PB-steric system\protect\footnotemark[2]~\eqref{eq:stericPB} in the domain~$[-L,L]$ where~$L\approx 1.4$,~$z_1=1=-z_2$,~$g_{11}=3.4$,~$g_{22}=0.6$ and~$g_{12}=2.65$,~$\phi_{-L}=0$~$\phi_L=-0.51$,~$\bar{c}_1=1.39$ and~$\bar{c}_2=0.95$.
A: Solution profiles~$c_1(x)$ ({\color{blue} solid}) and~$c_2(x)$ ({\color{red} dash-dot}). 
B:   Solution profile in the~$(c_1,c_2)$ plane ({\color{blue} solid}), solution trajectory ({\color{blue} dashes}), and the curve~$\{{\bf c}|E_x=0\}$ (dotted).  C: Solution profiles~$c_1(\phi)$ ({\color{blue} solid}) and~$c_2(\phi)$ ({\color{red} dash-dot}). Dashed curves are the functions~$\{c_i(\phi)\}$ defined by the steric Boltzmann equations~\eqref{eq:genPB_Boltzmann_nondimensional}. The profiles values at points~$x^\Box\approx 0.3$,~$x^\circ=L$ and~$x^*\approx -0.6$ are marked by~$\Box$, $\circ$ and * markers, respectively, in all graphs.}
\label{fig:crossD} 
\normalsize
\end{figure}
The sensitivity of the above solution to changes in the potential~$\phi$ is also evident when considering the derivatives of the concentration profiles with respect to~$\phi$
\[
\frac{dc_1}{d\phi}=-\frac{z_1(1+c_2g_{22})-z_2c_2g_{12}}{c_2 D},\quad \frac{dc_2}{d\phi}=-\frac{z_2(1+c_1g_{11})-z_1c_1g_{12}}{c_1 D},
\]
see~\eqref{eq:cix}.  Thus, 
\[
\lim_{D\to 0}\left|\frac{dc_i}{d\phi}\right|=\infty.
\] 
\begin{remark}
Lemma~\ref{lem:D=0} and the discussion below also applies to type II solutions.  These correspond to trajectories for which~$D>0$ in a surrounding of the intersection point with the line~$E_x=0$, but do intersect with the curve~$D=0$.  Therefore, while for low applied voltage type II solutions resemble type I solutions, at higher applied voltages the solution trajectory will cross the curve~$D=0$.  At which case, the corresponding smooth solution, if exists, will be unstable.
\end{remark}

Finally, we consider the stability of homogenous stationary solutions~$c_i(x)\equiv\bar{c}_i$ of the PNP-steric system~\eqref{eq:stericPNP}.
\begin{lemma}\label{lem:uniform_instability}
Let~${\bf c}_0\equiv(\bar{c}_1,\bar{c}_2)$ be a homogenous state satisfying local electroneutrality~\eqref{eq:fixedPoints}
\begin{equation*}
{\bf z}^T{\bf c}_0+\rho_0=0,
\end{equation*}
and 
\[
D({\bf c}_0)<0.
\]
Then,~${\bf c}_0$ is linearly unstable under Poisson-Nernst-Planck dynamics~\eqref{eq:stericPNP}.
\end{lemma}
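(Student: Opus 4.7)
\medskip

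\noindent\textbf{Proof proposal.} The plan is to mirror the linear stability analysis carried out in the proof of Lemma~\ref{lem:uniform_D>0}, and to show that the sign condition $D(\mathbf{c}_0)<0$ flips the sign of an eigenvalue of the dispersion matrix, producing a growing Fourier mode. Concretely, I would insert the ansatz $\mathbf{c}=\mathbf{c}_0+\bm\delta\,e^{ikx+\lambda t}$ into \eqref{eq:stericPNP}, use Poisson's equation to eliminate the potential perturbation $\tilde\phi = \mathbf{z}^T\bm\delta/k^2$ (valid for $k\ne 0$), and arrive at the very same dispersion relation \eqref{eq:disp_rel},
\[
\mathbf{A}(k)\bm\delta=-\lambda(k)\bm\delta,\qquad \mathbf{A}(k)=\mbox{diag}(\bar{\mathbf{c}})\bigl[k^2\,\mbox{Hessian}_{\mathbf{c}}(h)+\mathbf{z}\mathbf{z}^T\bigr].
\]
Note that the admissible modes are those compatible with the mass constraint \eqref{eq:chargeConservation}; this is automatic for $k\ne 0$ on a sufficiently large domain, so the argument for instability will only require the existence of a single wavenumber $k\ne 0$ for which $\mathbf{A}(k)$ has a negative eigenvalue.

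The key algebraic step is to reduce the spectral question for $\mathbf{A}(k)$ (which is not symmetric) to one for the symmetric matrix
\[
\mathbf{S}(k):=k^2\,\mbox{Hessian}_{\mathbf{c}}(h)+\mathbf{z}\mathbf{z}^T.
\]
I would do this by conjugating: $\mbox{diag}(\bar{\mathbf{c}})^{-1/2}\mathbf{A}(k)\mbox{diag}(\bar{\mathbf{c}})^{1/2}=\mbox{diag}(\bar{\mathbf{c}})^{1/2}\mathbf{S}(k)\mbox{diag}(\bar{\mathbf{c}})^{1/2}$, so $\mathbf{A}(k)$ is similar to a symmetric matrix with the same inertia as $\mathbf{S}(k)$ (by Sylvester's law, since $\mbox{diag}(\bar{\mathbf{c}})^{1/2}$ is positive definite). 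Therefore $\lambda(k)>0$ for some mode if and only if $\mathbf{S}(k)$ has a negative eigenvalue for some $k$.

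Now I exploit the hypothesis. By \eqref{eq:D}, $D(\mathbf{c}_0)=\det\mbox{Hessian}_{\mathbf{c}}(h)<0$, while the diagonal entries $1/\bar{c}_i+g_{ii}$ of the Hessian are strictly positive. Hence $\mbox{Hessian}_{\mathbf{c}}(h)$ has one positive and one negative eigenvalue; let $\mathbf{v}$ be a unit eigenvector with eigenvalue $\mu_-<0$. Then
\[
\mathbf{v}^T\mathbf{S}(k)\mathbf{v}=k^2\mu_-+(\mathbf{z}^T\mathbf{v})^2,
\]
which is strictly negative whenever $k^2>(\mathbf{z}^T\mathbf{v})^2/|\mu_-|$. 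Thus $\mathbf{S}(k)$, and therefore $\mathbf{A}(k)$, has a negative eigenvalue for all sufficiently large admissible $k$, producing $\lambda(k)>0$ and proving linear instability of $\mathbf{c}_0$.

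The proof is essentially a sign-flip of the convex-case argument, so no step is truly hard; the only point requiring care is the reduction to a symmetric spectral problem so that one may legitimately read off the inertia from $\mbox{Hessian}_{\mathbf{c}}(h)$ in the large-$k$ regime. I would also include a brief remark that on a bounded interval $[-L,L]$ the unstable wavenumber $k$ must be compatible with the boundary conditions and with \eqref{eq:chargeConservation}; for the no-flux, mass-preserving perturbations used here this is satisfied by cosine modes with $k=n\pi/L$, $n\ge 1$, so the instability is realized once $L$ is large enough that some such $k$ exceeds the threshold $(\mathbf{z}^T\mathbf{v})^2/|\mu_-|$.
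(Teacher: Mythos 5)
Your proof is correct and follows essentially the same route as the paper: the same Fourier ansatz yields the dispersion relation \eqref{eq:disp_rel}, and the negative determinant of $\mbox{Hessian}_{\bf c}(h)$ together with its positive diagonal (equivalently, positive trace) forces a negative eigenvalue that dominates at large $k$, giving $\lambda(k)>0$. You are in fact slightly more careful than the paper---the congruence/Sylvester step showing that $\mbox{diag}(\bar{\bf c})\,\mbox{Hessian}_{\bf c}(h)$ inherits the Hessian's inertia is left implicit there---and the only small slip is your closing remark that $L$ must be large: since the admissible wavenumbers $k=n\pi/L$ are unbounded in $n$, the threshold $k^2>({\bf z}^T{\bf v})^2/|\mu_-|$ is met for every $L>0$.
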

\begin{proof}
Consider a perturbation of the homogenous state~${\bf c}_0$
\[
{\bf c}={\bf c}_0+{\bf \bm\delta}e^{ikx+\mathbf{\lambda} t}.
\]
that satisfies the constraint~\eqref{eq:chargeConservation}.
Following the proof of Lemma~\ref{lem:uniform_D>0},~$\mathbf{\lambda}$ satisfies~\eqref{eq:disp_rel}.  In particular, for~$k\gg1$,
\begin{equation}\label{eq:eigvalueProblem}
{\bf A}\delta=-\lambda(k){\bm\delta},\quad {\bf A}(k)\approx k^2\mbox{diag}({\bf \bar c})\mbox{Hessian}_{{\bf c}_0}(h).
\end{equation}
Since~$D({\bf c}_0)=|\mbox{Hessian}_{{\bf c}_0}(h)|<0$ and since~${\rm tr}(\mbox{Hessian}_{{\bf c}_0}(h))>0$, the Hessian matrix has a negative eigenvalue~$\lambda_-<0$.  Thus,
\[
\lambda(k)\sim k^2|\lambda_-|,
\]
see Figure~\ref{fig:linearStability},
implying that the problem is linearly unstable.  
\begin{figure}[ht!]
\begin{center}
\scalebox{0.75}{\includegraphics{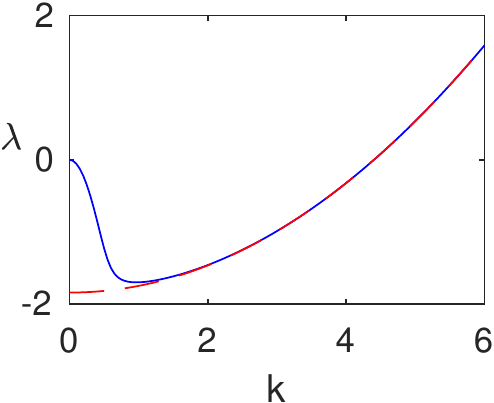}}
\end{center}
\caption{The maximal eigenvalue~$\lambda(k)$ of the eigenvalue problem~\eqref{eq:eigvalueProblem} as a function of~$k$ ({\color{blue} solid)}.  Dashed red curve is the function~$|\lambda_-|k^2$ where~$\lambda_-$ is the negative eigenvalue of the matrix~$\mbox{Hessian}_{{\bf c}_0}(h)$.}
\label{fig:linearStability} 
\normalsize
\end{figure}

\end{proof}

Lemma~\ref{lem:uniform_instability} can also be applied to the periodic solutions described in Theorem~\ref{lem:periodic_solutions}, in the case they have small enough amplitude~$|c_2^0-\bar{c}_2|\ll1$.
We, further verify by direct integration that periodic solutions on finite domain as well as periodic solutions with larger amplitudes are also unstable (data not shown). 

\subsection{Interim summary - failure of the PNP-steric at high bulk concentrations}\label{sec:interimSummary}
The study presented in Section~\ref{sec:nonConvex} considers solutions of the PNP-steric system~\eqref{eq:stericPNP} in two distinct cases.  In the dilute case, particularly when~$D(\bar{c}_1,\bar{c}_2)>0$, the PNP-steric system admits stable unique solutions, aka, type I solutions.  In the limit of zero applied voltage~$\phi_{\pm L}\to0$, these solutions tend to the homogenous bulk state~${\bf c}\equiv(\bar{c}_1,\bar{c}_2)$, in qualitative agreement with solutions of the PNP equation and the wide class of generalized PNP equations with a convex free energy.  In the concentrated solution case,~$D(\bar{c}_1,\bar{c}_2)<0$, the PNP-steric system~\eqref{eq:stericPNP} may admit multiple stationary type III solutions, e.g., the homogenous bulk state~${\bf c}\equiv(\bar{c}_1,\bar{c}_2)$ and periodic solutions.  These solutions, however, are all unstable { as higher and higher Fourier-modes become dominant.}  {\footnote{Type II solutions are a borderline case between type I and type III, they are well-defined and stable at low applied voltages, but not at higher ones.}  

To understand why the PNP-steric system~\eqref{eq:stericPNP}  becomes unstable and even ill-posed at high bulk concentrations let us first consider the classical PNP model~\eqref{eq:stericPNP_intro} with~$g_{ij}=0$.  Intuitively, in classical PNP theory, the entropy regularizes the solution and drives the system toward a spatially homogenous state.  Indeed, in the limit of zero charge density~${\bf z}\to0$, the PNP system~\eqref{eq:stericPNP_intro} with~$g_{ij}=0$, reduces to the heat equation and thus tends toward the spatially homogenous solution~${\bf c}\equiv\bar{\bf c}$.  
Steric effects, however, compete and may even dominate entropic effects.  Consider, for example, the PNP-steric model in a domain~$[-1,1]$ with~$g_{11}=g_{22}=0$, then the free energy takes the form, see~\eqref{eq:genPB_free_energy_with_L},
\[
\mathcal{E}=\mathcal{E}_{\rm entropy}+\mathcal{E}_{\rm electrostatic}+\mathcal{E}_{\rm steric},
\] 
where
\[
\mathcal{E}_{\rm entropy}=\sum_{i=1}^2\int_{-1}^{1} c_i\left(\ln \frac{c_i}{\bar{c}_i}-1\right)\, dx,\quad \mathcal{E}_{\rm electrostatic}=\int_{-1}^1 \frac{\epsilon}{2}|\nabla \phi|^2\,dx,\quad \mathcal{E}_{\rm steric} =2g_{12}\int_{-1}^1 c_1(x)c_2(x) \,dx.
\]
In the case of the spatially homogenous solution~${\bf c}_{\rm uniform}\equiv\bar{\bf c}$, the entropic and electrostatic contributions of the energy both vanish, while~$\mathcal{E}_{\rm steric}=4g_{12}\bar{c}^2$.  
Overall, the PNP-steric free energy of the homogenous solution scales as
\[
\mathcal{E}[{\bf c}_{\rm homogenous}]\sim g_{12}\bar{c}^2.
\]
In contrast, the `strongly segregated' pattern~${\bf c}_{\rm segregated}$ with segregation pattern frequency~$n$
\begin{equation}\label{eq:segregated_example}
c_1(x)=\begin{cases}
2\bar{c}& \frac{2k}{2n}<x<\frac{2k+1}{2n},\\
0&\frac{2k+1}{2n}<x<\frac{2k+2}{2n},
\end{cases}\quad c_2(x)=2\bar{c}-c_1(x),\qquad k=0,\cdots,n-1,
\end{equation}
yields~$\mathcal{E}_{\rm steric}=0$ since~$c_1(x)c_2(x)=0$.  Furthermore, direct integration shows that
\[
\mathcal{E}_{\rm electrostatic}=\frac{\bar{c}}{2n}, \quad \mathcal{E}_{\rm entropy}= \alpha\,\bar{c},\quad \alpha\approx 1.77,
\]
so that the overall free energy of the segregated pattern~\eqref{eq:segregated_example} is
\[
\mathcal{E}[{\bf c}_{\rm segregated}]=\left(\alpha+\frac1{2n}\right)\bar{c}.
\]
Clearly, the segregated pattern is energetically preferential over the homogenous solution at high concentrations,~$\bar{c}\gg \alpha/g_{12}$.  
Significantly, electrostatic contributions to the energy as well as the overall free energy decrease with increasing pattern frequencies.  The above example introduces functions with discontinuities.  It is possible to find a smooth solution which is arbitrarily close (e.g., in~$L^2$ norm) to~${\bf c}_{\rm segregated}$.  This solution would clearly have large gradients at the discontinuity points~$\frac{k}{2n}$ of~${\bf c}_{\rm segregated}$.  Nevertheless, since~$\mathcal{E}$ does not impose any energetic penalty for large gradients, one may expect that $\mathcal{E}[{\bf c}_{\rm smooth}]\approx \mathcal{E}[{\bf c}_{\rm segregated}]$. 
The emerging picture is that at high bulk concentrations, steric effects dominate entropic effects.  Consequently, entropy fails to regularize the solution and the system becomes { ill-posed} as it is pushed toward higher and higher frequencies.  
This nature of the PNP-steric system becomes apparent in the proof of Lemma~\ref{lem:uniform_instability}.
\section{The PNP-Cahn-Hilliard  (PNP-CH) model}\label{sec:PNPCH}
The failure on the PNP-steric~\eqref{eq:stericPNP} model in the case of high bulk concentrations where entropic contributions are insufficient to efficiently regularize the solution suggests that the PNP-steric model is missing terms that become dominant in the presence of large gradients.  In this section, we introduce
high-order terms that directly regularize the solution giving rise to the PNP-Cahn-Hilliard model. 
\subsection{Model derivation}\label{sec:PNPCH_derivation}
Let us consider the action potential
\[
\mathcal{A}=\mathcal{A}_{\rm PNP-steric}+\frac{\sigma}2\int_\Omega \sum_{i=1}^N|{\nabla c_i}|^2 \,d{\bf x},
\]	
where~$\mathcal{A}_{\rm PNP-steric}$ is the action potential~\eqref{eq:stericPNP_action_potential} of the PNP-steric model~\eqref{eq:stericPNP}, and 
where~$\sigma$ is a gradient energy coefficient.  The gradient energy term~$\sum_{i=1}^N|\nabla c_i|^2$ introduces a direct energetic penalty to large gradients, and thus regularizes the solution.  
This term arises naturally in Cahn-Hilliard (Ginzburg-Landau) mixture theory~\cite{cahn1958free}, and has been recently used to model steric interactions in ionic liquids~\cite{gavish2016theory}.   Moreover, while the steric interaction term~${\bf c}^T{\bf G}{\bf c}$, see~\eqref{eq:stericPNP_action_potential}, is derived by a leading order local approximation of the Lennard-Jones interaction potential~\cite{lin2014new,Horng:2012io}, the high-order gradient energy term seems to be tightly related to high-order terms in the expansion of the Lennard-Jones potential~\cite{Chun_Personal_Communication}. 
Since the steric and the gradient energy terms appears in the Cahn-Hilliard mixture theory, and since, as will be shown, the resulting system behaves more like a non-local Cahn-Hilliard (Ohta-Kawasaki) system than a PNP system, we will denote the resulting system as the {\em Poisson-Nernst-Planck-Cahn-Hilliard (PNP-CH) model}.

Similar to the PNP-steric, requiring that~$\phi$ is a critical point of the action yields Poisson's equation~\eqref{eq:genPNP_Poisson_dimensional}, whereas 
the Wasserstein gradient flow along the action gives rise to generalized Nernst-Planck equations for the dynamics of charge concentration 
\begin{equation}\label{eq:Nernst-Planck_CH}
\frac{dc_i}{dt}+\nabla \cdot J_i=0,\qquad J_i=-\breve{D}c_i\nabla\frac{\delta  \mathcal{A}}{\delta c_i}=-\breve{D}c_i\nabla \left(k_BT\log c_i+z_iq\phi+\sum_{i=1}^2 g_{ij}c_j-\sigma \Delta c_i\right).
\end{equation}
Introduction of the non-dimensional variables~\eqref{eq:scaling} and
\[
\tilde \sigma=\frac{\sigma}{\lambda^2 k_BT},
\]
yields the PNP-CH equation (presented after omitting the tildes)
\begin{equation}\label{eq:stericPNP-CH}
\begin{split}
&\frac{dc_1}{dt}=\frac{d}{dx}\left[(1+c_1 g_{11})c^\prime_1(x)+c_1g_{12}c^\prime_2(x)+z_1c_1\phi^\prime(x)-\sigma c_1c_{1,xxx}\right],\\
&\frac{dc_2}{dt}=\frac{d}{dx}\left[(1+c_2 g_{22})c^\prime_2(x)+c_2g_{12}c^\prime_1(x)+z_2c_2\phi^\prime(x)-\sigma c_2c_{2,xxx}\right],\\
&\phi_{xx}=-(z_1c_1+z_2c_2+\rho_0).
\end{split}
\end{equation}
We adopt the no-flux boundary conditions for the species concentration and the Dirichlet boundary conditions for the potential of the PNP-steric equation, see~\eqref{eq:genPNP_nondimensional_BC}.  The high-order terms, however, make it necessary to introduce additional boundary conditions.  The newly introduced gradient energy term is expected to become dominant only when the PNP-steric system fails, i.e., when~$D<0$, see~\eqref{eq:D}.  Since near a sufficiently charged wall,~$D>0$ (see Section~\ref{sec:nonConvex}), it is reasonable to set
\begin{equation}\label{eq:highOrderBC}
\left.\Delta c_i\right|_{x=\pm L}=0.
\end{equation}
For this choice, the stationary profiles, given by
\[
\frac{\delta\mathcal{A}}{\delta c_i}= k_BT\log c_i +q z_i \phi + \sum_{j=1}^N g_{ij} c_j-\sigma \Delta c_i=\lambda_i,\quad i=1,2,
\]
where~$\lambda_i$ are associated with mass conservation, are independent of~$\sigma$ at the boundaries.

The arising system~(\ref{eq:stericPNP-CH},\ref{eq:genPNP_nondimensional_BC},\ref{eq:highOrderBC}) accounts for the various interactions between the model components in a self-consistent way, while satisfying the second law of Thermodynamics, via the Onsager relation,
\[
\frac{d \mathcal{A}}{dt}=-\frac{\breve{D}}{k_BT}\int_\Omega \sum_{i=1}^2 c_i\left|\nabla\frac{\delta \mathcal{A}}{\delta c_i}\right|^2\, dx\le 0.
\]

\subsection{Linear stability analysis of the homogenous state}\label{sec:linearAnalysis}
Recall that in the absence of applied voltage,~$\phi_{\pm L}=0$, the homogenous stationary solution~${\bf c}\equiv \bar{\bf c}$ is stable under dynamics of all generalized PNP models with a convex free energy, see Section~\ref{sec:convex}.  In contrast, at large enough concentrations, the homogenous stationary solution is unstable under PNP-steric dynamics, see Lemma~\ref{lem:uniform_instability}.  Here, we study the stability of the homogenous stationary solution under dynamics of the PNP-CH system~(\ref{eq:stericPNP-CH},\ref{eq:genPNP_nondimensional_BC},\ref{eq:highOrderBC}) in free-space, i.e.,~$L\to\infty$ and~$\phi_{\pm L}=0$.

Let us consider a perturbation of the homogenous state~$c_i\equiv \bar{c}_i,\, i=1,2$,
\begin{equation}\label{eq:perturbed_barc}
c_1=\bar{c}_1+\varepsilon u_1 e^{ikx+\lambda t},\quad c_2=\bar{c}_2+\varepsilon v_1 e^{ikx+\lambda t},
\qquad 0<\varepsilon\ll1.
\end{equation}
Substitution of~\eqref{eq:perturbed_barc} into the PNP-CH system~\eqref{eq:stericPNP-CH} yields the linearized system
${\bf M} v=\lambda v$, where
\begin{equation}\label{eq:M}
{\bf M}(k;\bar{c}_1,\bar{c}_2,\sigma)=-\left[\begin{array}{cc}
\bar{c}_1&\\
&\bar{c}_2
\end{array}\right]
\left[\begin{array}{cc}
\left(\frac{1}{\bar{c}_1}+ g_{11}\right)k^2+z_1^2+k^4\sigma&k^2g_{12}+z_1z_2\\
k^2g_{12}+z_1z_2&\left(\frac1{\bar{c}_2}+ g_{22}\right)k^2+z_2^2+k^4\sigma
\end{array}
\right],
\end{equation}
and~$v=[u_1,v_1]^T$.
The trace of the above matrix~${\bf M}(k)$ is negative for all~$k\in R$.  Therefore, at least one of its eigenvalues in strictly negative.  Let us denote by~$\lambda(k)$ the larger eigenvalue of~${\bf M}(k)$.  

When~$k=0$, the matrix~${\bf M}$ has one zero eigenvalue~$\lambda(k=0)=0$ with corresponding eigenvector~$\vec{v}_0:=(-z_2,z_1)^T$.  This eigenvector corresponds to changes in bulk concentration that do not effect global charge neutrality, see \eqref{eq:zeroVoltageRefPoint}. 
For~$k\ne0$,~$\lambda(k)$ satisfies
\begin{equation}\label{eq:dispersion_rel}
\left(\lambda(k)+\left(\frac1{\bar{c}_1}+ g_{11}\right)k^2+z_1^2+k^4\sigma\right)\left(\lambda(k)+\left(\frac1{\bar{c}_2}+ g_{22}\right)k^2+z_2^2+k^4\sigma\right)-(k^2g_{12}+z_1z_2)^2=0
\end{equation}
Varying~$\sigma$, the instability onset is obtained by seeking~$\sigma=\sigma_c$ and~$k_c$ such that $\lambda(k=k_c,\sigma=\sigma_c)=0$, $\lambda(k\ne k_c)<0$ and~$\frac{d\lambda}{dk}(k_c)=0$.  At these conditions,~$\sigma>\sigma_c$, implies that~$\lambda(k)<0$ for all~$k\ne0$, hence the homogenous solution is stable.
Substituting~$\lambda=0$ in~\eqref{eq:dispersion_rel} yields that~$\sigma_c$ satisfies
\begin{equation}\label{eq:rel_k_sigma}
\begin{split}
&\sigma_c^2k^8+\left(\frac1{\bar{c}_1}+ \frac1{\bar{c}_2}+g_{11}+g_{22}\right)\sigma_c k^6+\sigma_c(z_1^2+z_2^2)k^4+\left[\left(\frac1{\bar{c}_1}+ g_{11}\right)\left(\frac1{\bar{c}_1}+
g_{22}\right)-g_{12}^2\right]k^4\\
&+z_2^2\left(\frac1{\bar{c}_1}+g_{11}\right)+z_1^2\left(\frac1{\bar{c}_2}+g_{22}\right)-2z_1z_2k^2g_{12}=0.
\end{split}
\end{equation}
Since all terms in the above equality are strictly positive, except possibly the fourth term, a necessary condition for instability about~$(\bar{c}_1,\bar{c}_2)$ is that
\[
\left(\frac1{\bar{c}_1}+ g_{11}\right)\left(\frac1{\bar{c}_2}+
g_{22}\right)-g_{12}^2<0,
\]
or
\[
g_{12}>g_{12}^{\rm crit},\qquad g_{12}^{\rm crit}:=\sqrt{\left(\frac1{\bar{c}_1}+ g_{11}\right)\left(\frac1{\bar{c}_2}+
g_{22}\right)}.
\]
This condition is nothing but the condition~$D(\bar{c}_1,\bar{c}_2)<0$, see~\eqref{eq:D}. 
When~\eqref{eq:rel_k_sigma} has a solution~$k_c,\sigma_c>0$, the solution of the linearized system ${\bf M} v=\lambda v$ at the bifurcation point~$\sigma=\sigma_c$ takes the form 
\begin{equation}\label{eq:lin_solution}
\left[\begin{array}{c}c_1\\c_2\end{array}\right]=\left[\begin{array}{c}\bar c_1\\\bar c_2\end{array}\right]+\varepsilon \left(\alpha\vec v_0+\beta\vec v_{k_c}e^{ik_cx}+ c.c.\right),
\end{equation}
where~$\varepsilon$ is the magnitude of the~$k=0$ and~$k=k_c$ modes of the initial condition,~$\alpha,\,\beta$ are constants, and~$\vec v_0$,~$\vec v_{k_c}$ are the normalized eigenvectors corresponding the zero eigenvalue of the matrices~${\bf M}(0)$ and~${\bf M}(k_c)$, respectively,
\[
{\bf M}(0) \vec v_0=\vec 0,\qquad {\bf M}(k_c) \vec v_{k_c}=\vec 0,\quad \|\vec v_0\|=\|\vec v_{k_c}\|=1. 
\]
The expressions for~$(k_c,\sigma_c)$ are analytically available but, in the general case, they are cumbersome.  
In the symmetric case,~$g:=g_{11}=g_{22}$,~$\bar{c}:=\bar{c}_1=\bar{c}_2$ and~$z_1=-z_2=1$, the expressions for the instability onset reduce to
\[
\sigma_c=\frac{\left(g_{12}-g_{12}^{\rm crit}\right)^2}8,\quad k_c=\frac{2}{\sqrt{g_{12}-g_{12}^{\rm crit}}},\quad g_{12}^{\rm crit}=g+\frac1{\bar{c}}.
\]

\subsection{Weakly nonlinear analysis}\label{sec:weaklyNonlinear}
To understand the nature of instability at~$\sigma=\sigma_c$, we conduct a weakly nonlinear analysis.  For simplicity, we restrict to the case~$\bar{c}:=\bar{c}_1=\bar{c}_2$.
Let us consider the slow dynamics of a solution slightly below the instability onset~$\sigma=\sigma_c-\varepsilon$.
\[
\left[\begin{array}{c}c_1(T,x)\\c_2(T,x)\end{array}\right]=\bar{\bf c}+\sqrt{\varepsilon} \vec u_0 +\varepsilon \vec u_1+\varepsilon^{3/2} \vec u_2+\cdots,\qquad T=\varepsilon t.
\]
Motivated by~\eqref{eq:lin_solution}, we consider the ansatz
\[
\vec u_0(T,x)=\alpha_0(T)\vec v_0+\beta_0(T)\vec v_{k_c}e^{ik_cx}+c.c.
\]
Note that we limit this study to spatially uniform amplitudes~$\alpha_0$ and~$\beta_0$.
In this case, conservation of mass implies that
\[
\frac{d}{dT}\int c_1 \,dx=0,\quad \frac{d}{dT}\int c_2 \,dx=0.
\]
Hence,~$\alpha_0(T)\equiv0$.\footnote{In the general case where~$\alpha_0$ is spatially varying, the nonlinear interaction between the two modes,~$k=0$ and~$k=k_c$, specifically via the spatial derivatives of~$\alpha_0$, must be taken into account, see, e.g.,~\cite{cox2001new,golovin2003self,schneider2013justification}.}

At~$O(\varepsilon)$, the solution satisfies
\[
{\bf M}(k)\vec u_1-\vec r_1(T)e^{2ik_cx}=0,\quad \vec r_1=\beta_0^2(T)\,\mbox{diag}(\vec v_{k_c})\left[\begin{array}{cc}
k_c^4\sigma+g_{11}k_c^2+z_1^2&k_c^2g_{12}+z_1z_2\\
k_c^2g_{12}+z_1z_2& k^4_c\sigma+g_{22}k_c^2+z_2^2
\end{array}
\right]\vec v_{k_c},
\]
whose solution is of the form
\[
\vec u_1=\alpha_1(T)\vec{v}_0+\beta_1(T)\vec v_{k_c}e^{ik_cx}+\vec \gamma(T)e^{2ik_cx},
\]
where~$\vec \gamma$ satisfies
\[
{\bf M}(2k_c)\vec\gamma(T)=\vec r_1(T).
\]

At~$O(\varepsilon^{3/2})$, the equations take the form
\begin{subequations}\label{eq:epsilon32}
\begin{equation}
{\bf M}(k_c)\left[\begin{array}{c}A_{21}\\B_{21}\end{array}\right]e^{ik_c x} =\left[-\frac{d\beta_0(T)}{dT}\vec{1}+\beta_0(T)\vec{a} +\beta_0^3(T)\vec{b}\right]e^{ik_c x}+\vec F[v_1,v_2]e^{2ik_c x}+\cdots.
\end{equation}
where
\begin{equation}
\begin{split}
\vec{a}&=\frac{k_c^4\bar{c}}2\left[1,-\frac{k_c^4\sigma_c+\left(g_{11}+\frac1{\bar{c}}\right)k_c^2+z_1^2}{g_{12}k_c^2+z_1z_2}\right]^T,
\\\vec{b}&=\frac18\left[
\begin{array}{cc}
-1&\\&\frac1{(g_{12}k_c^2+z_1z_2)\bar{c}}
\end{array}\right]
\left[
\begin{array}{cc}
16k_c^4\sigma_c+4g_{11}k_c^2+z_1^2+\frac{2}{\bar{c}}k_c^2&4g_{12}k_c^2+2z_1z_2\\
(4g_{12}k_c^2+z_1z_2)(k_c^4\sigma_c\bar{c}+g_{11}k_c^2\bar{c}+z_1^2\bar{c}+k_c^2)& C_{22}\end{array}
\right]\vec\gamma,
\end{split}
\end{equation}
and
\begin{equation}
\begin{split}
C_{22}=&14k_c^8\sigma_c^2\bar{c}+(14(1+(g_{11}+g_{22}/7)\bar{c}))\sigma_c k_c^6+(((14z_1^2-z_2^2)\sigma_c+2g_{22}g_{11}+2g_{12}^2)\bar{c}+2g_{22})k_c^4\\&+((-g_{11}z_2^2+4g_{12}z_1z_2+2g_{22}z_1^2)\bar{c}-z_2^2)k_c^2+cz_1^2z_2^2.
\end{split}
\end{equation}
\end{subequations}
Since~${\bf M}(k_c)$ is a singular matrix with~$\vec v_0\in \ker{\bf M}(k_c)$, Equation~\eqref{eq:epsilon32} has a solution only when
\[
v_0\perp \left[-\frac{d\beta_0(T)}{dT}\vec{1}+\beta_0(T)\vec{a} +\beta_0^3(T)\vec{b}\right].
\]
In particular, the stationary solution~$\beta_0(T)\equiv \beta_0$ satisfies
\[
\beta_0^2=-\frac{<\vec v_0,\vec a>}{<\vec v_0,\vec b>}.
\]
If~$-\frac{<\vec v_0,\vec a>}{<\vec v_0,\vec b>}>0$, the bifurcation is super-critical leading to the solution
\[
c_1(x)=\bar{c}+\sqrt{\sigma_c-\sigma}\sqrt{-\frac{<\vec v_0,\vec a>}{<\vec v_0,\vec b>}}\cos(k_c x),
\]
otherwise the bifurcation is sub-critical.
In the symmetric case,~$g_{11}=g_{22}$ and~$z_1=-z_2=1$, the analytic expression is available 
\[
-\frac{<\vec v_0,\vec a>}{<\vec v_0,\vec b>}=\frac{32\bar{c}^3}{g_{12}-g_{12}^{\rm crit}}\frac{3g_{12}-g_{12}^{\rm crit}}{3(g_{12}-g_{12}^{\rm crit})+3g_{12}+g}>0,
\]
implying that bifurcations in the symmetric case are always super-critical.

Figure~\ref{fig:bifDiagram}A presents the regions in parameter space in which the system undergoes supercritical and subcritical phase transitions.  As expected, in the symmetric case~$g_{11}-g_{22}=0$, phase transitions are always super-critical, while subcritical bifurcations emerge in the asymmetric case~$g_{11}\ne g_{22}$, near the curve~$g_{12}=g_{12}^{\rm crit}$.  Below the curve~$g_{12}<g_{12}^{\rm crit}$, the only stable solution is the homogenous state~${\bf c}\equiv\bar{\bf c}$, see Figure~\ref{fig:bifDiagram}(1).  The bifurcation diagram of the system at point `a' residing in the supercritical regime is presented in Figure~\ref{fig:bifDiagram}B.  At~$\sigma<\sigma_c$, the homogenous state becomes unstable, and rather a branch of stable periodic solutions whose amplitude is proportional to~$\sqrt{\sigma_c-\sigma}$, see Figure~\ref{fig:bifDiagram}(2), emerges.  Finally, Figure~\ref{fig:bifDiagram}C presents the bifurcation diagram of the system at point `b' which resides in the subcritical regime.  In this case, there exists a region~$\sigma_c<\sigma<\sigma_{\rm fold}$, where~$\sigma_{\rm fold}$ is the fold point, of multiple stationary which are all stable.  For example, as verified by direct simulation, at point~$\sigma=\sigma^*$, both the homogenous state (Figure~\ref{fig:bifDiagram}(1)) and the periodic state presented in Figure~\ref{fig:bifDiagram}(3) are stable under PNP-CH dynamics with periodic boundary conditions for~$\phi$.
\begin{figure}[ht!]\begin{center}
\scalebox{0.9}{\includegraphics{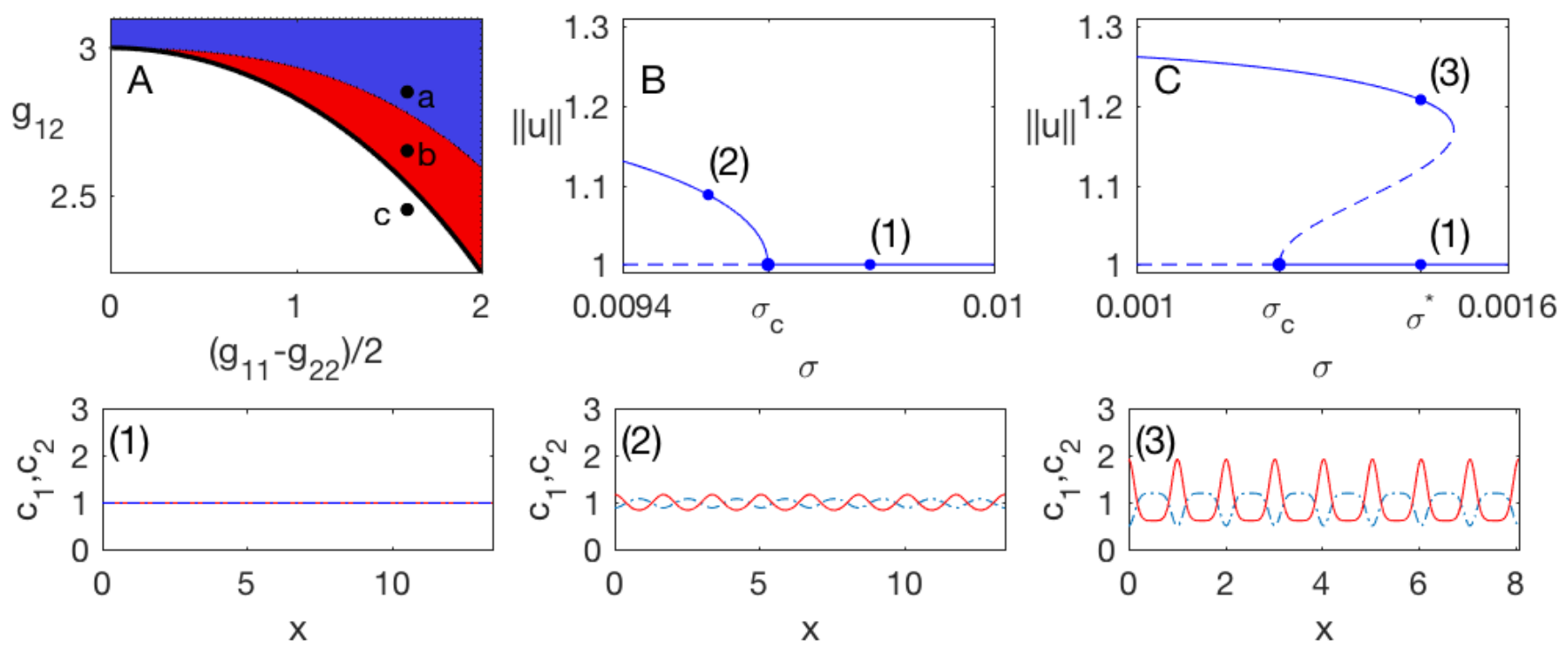}}
\end{center}
\caption{A: Parameter space depicting the regions of parameters~$(g_{11}-g_{22})/2$ and~$g_{12}$ where~$g_{11}+g_{22}=4$ and~$\bar{c}=1$ in which the system undergoes supercritical (blue region including point a) and subcritical phase transitions (red region including point b) at~$\sigma=\sigma_c$.  Solid curve is~$g_{12}=g_{12}^{\rm crit}$ below which the homogenous state~$c_i\equiv \bar{c}$, presented in graph (1), is stable.  B,C: The bifurcation diagrams corresponding to points a and b, respectively, in the parameter space.  Solid curves are stable branches, while dashed curve correspond to unstable solutions.  Graphs (1)-(3): Example plots of~$c_1(x)$ (dash-dots) and~$c_2(x)$ (solid) at points a-c of diagrams A, B are presented in the bottom graphs.}
\label{fig:bifDiagram} 
\normalsize
\end{figure}

\subsection{Effect of finite domain size}\label{sec:finiteDomain}
\begin{figure}[ht!]\begin{center}
\scalebox{0.9}{\includegraphics{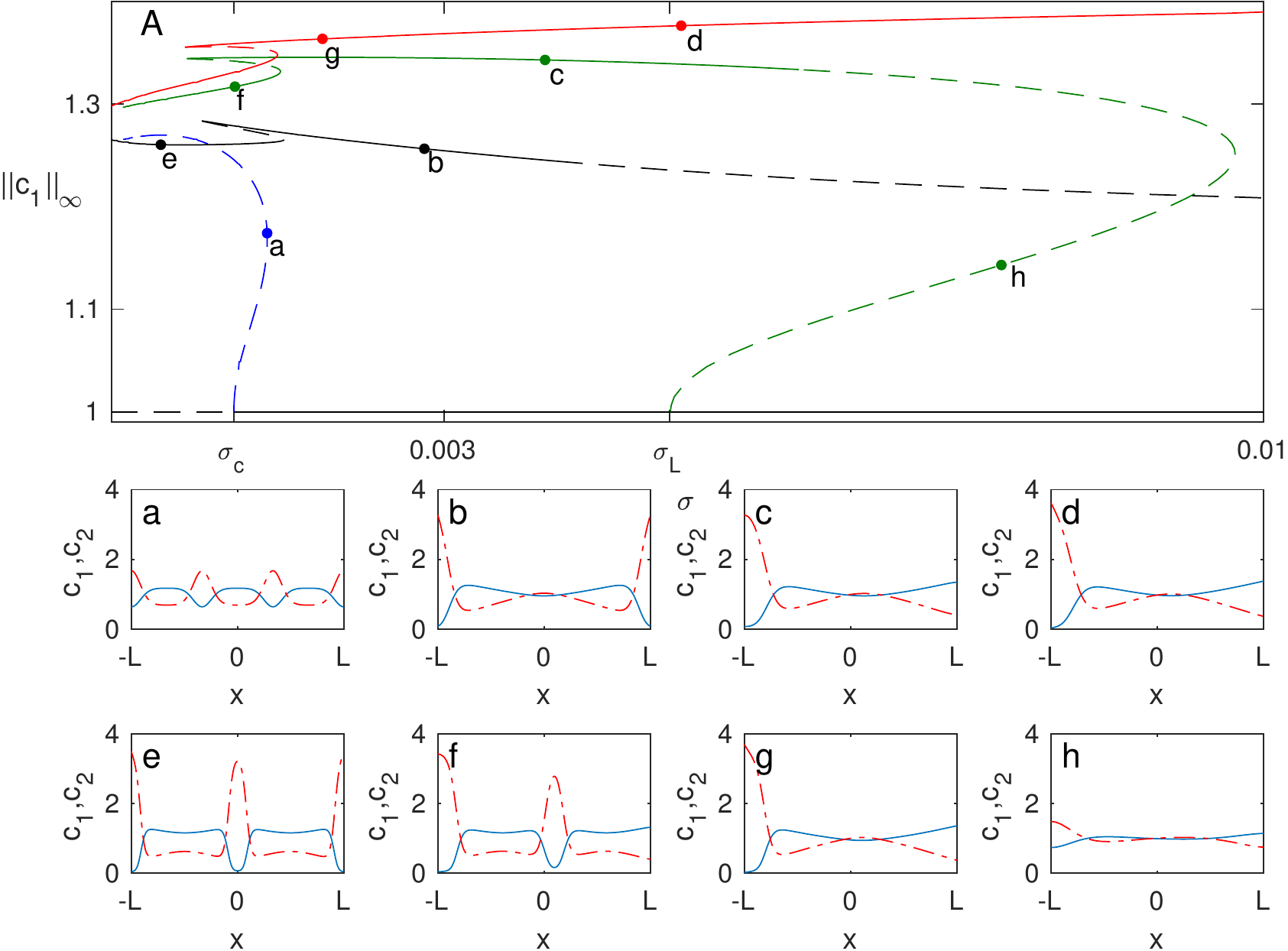}}
\end{center}
\caption{A: Bifurcation diagram of stationary solution of the PNP-CH system~(\ref{eq:stericPNP-CH},\ref{eq:genPNP_nondimensional_BC},\ref{eq:highOrderBC})  in the domain~$[-L,L]$ where~$L=3\pi/k_c$ and~$k_c\approx 6.23$ with parameters~$g_{11}=3.6,g_{22}=0.4,g_{12}=2.65$,~$z_1=1=-_2$~$\bar{c}_1=\bar{c}_2=1$. Solid curves are stable branches with respect to PNP-CH dynamics, while dashed curve correspond to unstable solutions. Plots of~$c_1(x)$ (solid blue) and~$c_2(x)$ (dash-dotted red) at points a-h of diagram A are presented in the bottom graphs a-h.}
\label{fig:bifurcationDiagramV0} 
\normalsize
\end{figure}
To study the effect of finite domain size, we conduct a numerical continuation study of the PNP-CH system~(\ref{eq:stericPNP-CH},\ref{eq:genPNP_nondimensional_BC},\ref{eq:highOrderBC}) in a finite domain~$[-L,L]$ using pde2path 2.0~\cite{dohnal2014pde2path,uecker2014pde2path}.   Unlike a typical continuation study which considers stability of the solution with respect to the (elliptic) stationary equation, we test here stability with respect to PNP-CH dynamics by linking the continuation method to a Comsol 5.2a finite element solver.  When a solution is found to be unstable, the solver returns the computed (stable) stationary solution it had converged to.  Therefore, in addition to stability information, this method allows to efficiently map distinct solution branches by studying the stationary solutions computed by the solver.  See Section~\ref{sec:numerics} for numerical details.

The bifurcation diagram of the PNP-CH~(\ref{eq:stericPNP-CH},\ref{eq:genPNP_nondimensional_BC},\ref{eq:highOrderBC})  in the domain~$[-L,L]$ where~$L=3\pi/2k_c$, and otherwise with the same parameters as in Figure~\ref{fig:bifDiagram}, is presented in Figure~\ref{fig:bifurcationDiagramV0}A.  As expected, we recover the branch of periodic solutions with period~$\pi/k_c$, see blue branch in Figure~\ref{fig:bifurcationDiagramV0}A, and a plot of a solution in point `a' along this branch in Figure~\ref{fig:bifurcationDiagramV0}a.  As noted, in free-space, solutions along this branch become stable under PNP-CH dynamics after the fold point~$\sigma=\sigma^*$, see~Figure~\ref{fig:bifDiagram}C.  In contrast, we observe that, in a finite domain, all solutions along this branch are unstable under PNP dynamics.  Rather, additional branches with corresponding stable solutions emerge.  The black branch in Figure~\ref{fig:bifurcationDiagramV0}A corresponds to periodic solutions with period~$3\pi/k_c$ (point `b', see Figure~\ref{fig:bifurcationDiagramV0}b) or with period~$3\pi/2k_c$ (point `e', see Figure~\ref{fig:bifurcationDiagramV0}e).  Two additional branches (green and red with points `c' and `d', respectively) correspond to asymmetric solutions, see, e.g., Figure~\ref{fig:bifurcationDiagramV0}c.  Note that since~$\phi=V=0$ on both boundaries, there is no preferred direction for the solutions.  Hence, every point along the green and red branches corresponds to two solutions which are reflections of each other.  Hence, for example, at~$\sigma=0.003$, we find that the system has six different stable stationary solutions: the homogenous state, the symmetric solution corresponding to point `b', two solutions corresponding to a point on the red branch and two additional ones corresponding to a point on the green branch.
Particularly, we observe multiple stable stationary solutions over the a wide range of~$\sigma$. 
\subsection{Effect of applied voltage}\label{sec:appliedVoltage}
Following the study of finite-domain effect in Section~\ref{sec:finiteDomain}, we now apply numerical continuation methods~\cite{dohnal2014pde2path,uecker2014pde2path} to study the effect of applied voltage.  

We consider the PNP-CH system~(\ref{eq:stericPNP-CH},\ref{eq:genPNP_nondimensional_BC},\ref{eq:highOrderBC}) in the domain~$[-5,5]$ with applied voltage~$\phi_L=-\phi_{-L}=1$, and otherwise with the same parameters as in Figure~\ref{fig:bifDiagram}.  The corresponding bifurcation diagram is presented in Figure~\ref{fig:bifurcation_V1} where solid curves correspond to solutions which are stable with respect to PNP-CH dynamics, and dashed curve correspond to unstable solutions.  
Figure~\ref{fig:bifurcation_V1_solutions} presents concentration profiles of solutions corresponding to points `a'-`o' along various branches presented in Figure~\ref{fig:bifurcation_V1}. 
We observe that the bifurcation diagram is significantly more complex than the one presented in Figure~\ref{fig:bifurcationDiagramV0} for solutions of a PNP-CH~(\ref{eq:stericPNP-CH},\ref{eq:genPNP_nondimensional_BC},\ref{eq:highOrderBC}) in the domain~$[-L,L]$ where~$L\approx 1.49$ and with no applied voltage~$V=0$.  As we shall see, this is mainly due to the larger domain size in Figure~\ref{fig:bifurcation_V1}.  Indeed, as expected, the solutions profiles are essentially identical near the charged boundaries, see Figure~\ref{fig:bifurcation_V1_solutions}.  The reason is that near a sufficiently charged wall, the concentration profiles near the boundaries are independent of the high-order gradient-energy terms.  Namely, boundary layers of~$O(1/\sqrt{\sigma})$ width do not develop in the concentration profiles, and rather dominant balance shows that
\[
c_1\approx e^{-z_1\phi},\quad c_2\approx -\frac{z_2\phi}{g_{22}},\qquad \phi\gg1,\qquad  {\rm or}\qquad c_2\approx e^{-z_2\phi},\quad c_1\approx -\frac{z_1\phi}{g_{11}},\qquad \phi\ll-1,
\]
see discussion before Lemma~\ref{lem:D=0}.  The different branches, therefore, describe distinct behavior in the bulk, namely, different numbers, amplitudes and locations of peaks.    Particularly, while the lowest branch in the bifurcation diagram, see Figure~\ref{fig:bifurcation_V1}, corresponds to solutions with a uniform bulk, see Figure~\ref{fig:bifurcation_V1_solutions}a, solutions `f' and `o' have one peak in the bulk, solutions `c', `l' and `m' have two peaks, and solution `h' with three peaks.  Finally, solution `j' has four peaks, see Figure~\ref{fig:bifurcation_V1_solutions}.  The maximal number of peaks is dictated by the domain size.  Thus, the bifurcation diagram on a larger domain will have more branches.  To distinguish between the number of peaks, and to distinguish between two solutions that have the same number of peaks but at different locations, e.g., solutions `l' and `m', we choose a weighted arc-length solution norm 
\begin{equation}\label{eq:weighted_arc-length}
\|c_1\|:=\int_{-L}^L \left(1+\frac{x+L}{2L}\right)\sqrt{1+|\nabla c_1|^2}\,dx,
\end{equation}
to plot bifurcations in the~$(\sigma,\|c_1\|)$ plane.

 
\begin{figure}[ht!]\begin{center}
\scalebox{0.9}{\includegraphics{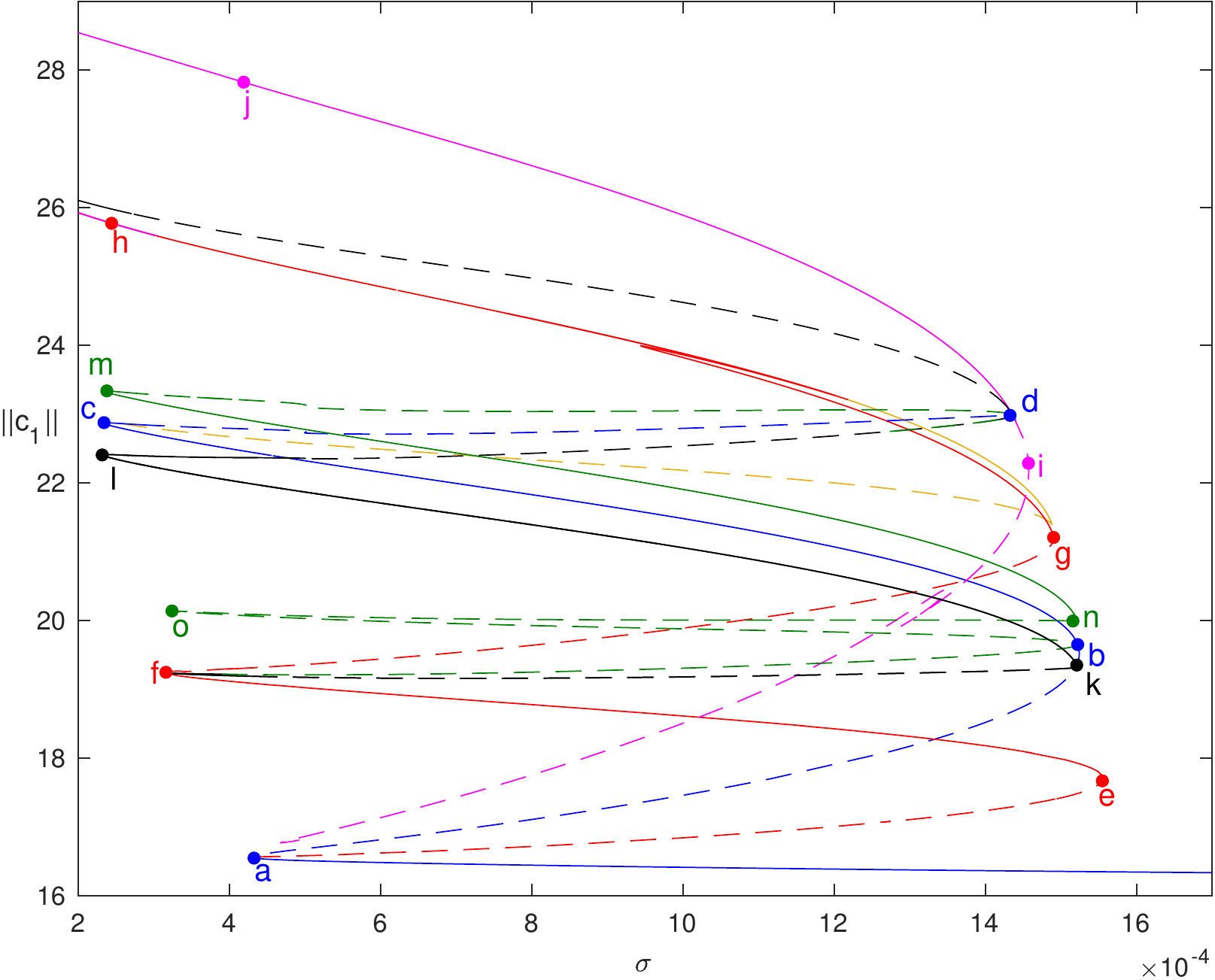}}
\end{center}
\caption{Bifurcation diagram of stationary solutions of PNP-CH system~(\ref{eq:stericPNP-CH},\ref{eq:genPNP_nondimensional_BC},\ref{eq:highOrderBC})  in the domain~$[-5,5]$ with applied voltage~$V=1$, and parameters~$g_{11}=3.6$,~$g_{22}=0.4$,~$g_{12}=2.65$,~$z_1=1=-z_2$ and~$\bar{c}_1=\bar{c}_2=1$.  Here~$\|c_1\|$ is a weighted arc-length norm given by~\eqref{eq:weighted_arc-length}. Solid curves are stable branches with respect to PNP-CH dynamics, while dashed curve correspond to unstable solutions.  }
\label{fig:bifurcation_V1} 
\normalsize
\end{figure}

\begin{figure}[ht!]\begin{center}
\scalebox{0.9}{\includegraphics{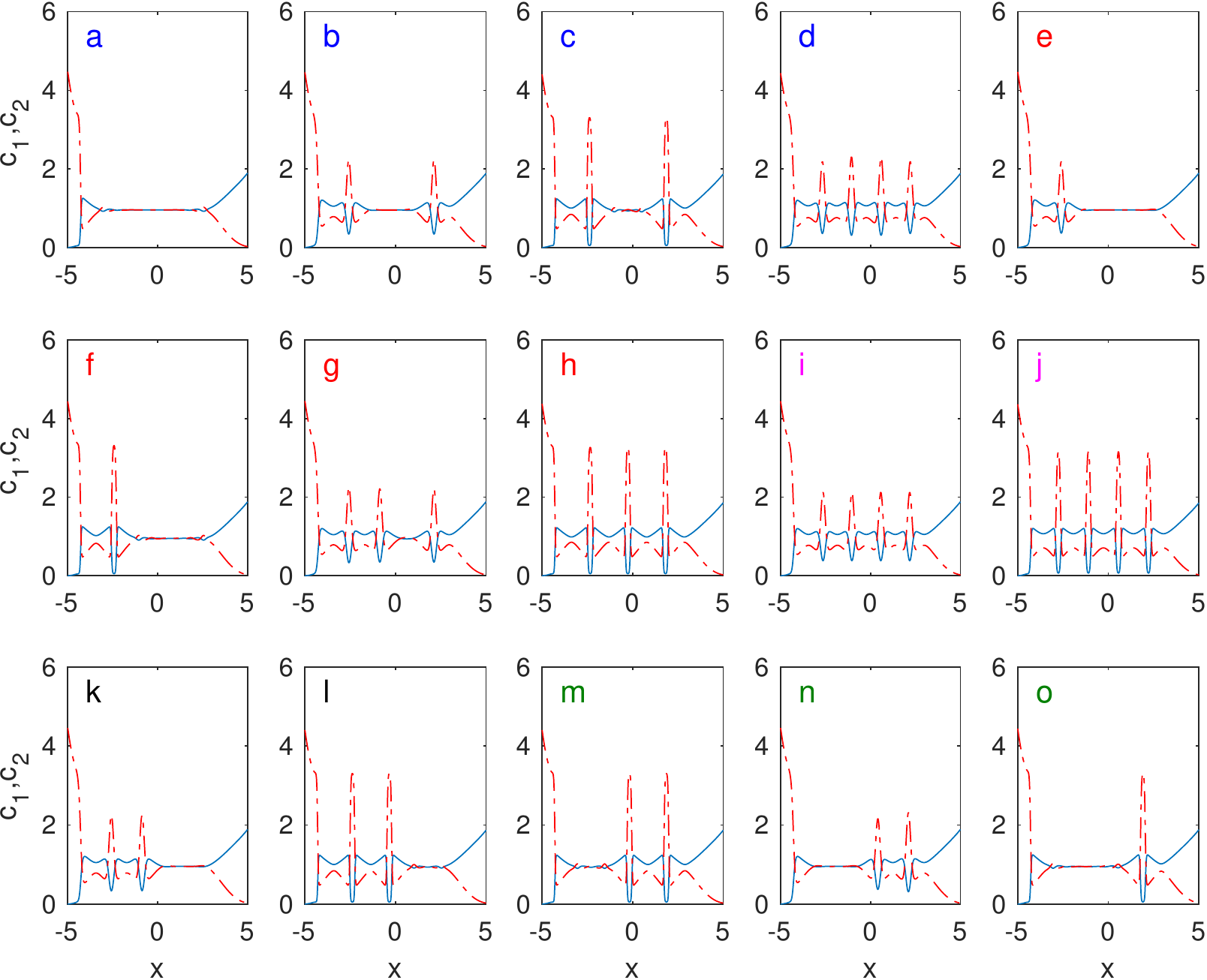}}
\end{center}
\caption{Charge concentration profiles~$c_1(x)$ ({\color{blue} solid}) and~$c_2(x)$ ({\color{red} dash-dots}) corresponding to points a-o of  Figure~\ref{fig:bifurcation_V1}. 
}
\label{fig:bifurcation_V1_solutions} 
\normalsize
\end{figure}
\section{Numerical methods}\label{sec:numerics}
The stationary solutions of the PNP-steric equation, presented in Sections~\ref{sec:convex} and~\ref{sec:nonConvex}, were computed by solving the initial value problem~(\ref{eq:StericPB_inverse},\ref{eq:IC}) using Matlab's ode45.  As noted, this is the reason many of the boundary-value problem parameters in Figures~\ref{fig:convexCase}-\ref{fig:crossD} are not round numbers.

The numerical continuation study of the stationary solutions of the PNP-CH system~(\ref{eq:stericPNP-CH},\ref{eq:genPNP_nondimensional_BC},\ref{eq:highOrderBC}) in a finite domain~$[-L,L]$ was conducted using pde2path 2.0~\cite{dohnal2014pde2path,uecker2014pde2path}.  
{ Resolving the bifurcation diagram for these system, however, is a demanding problem for standard continuation procedure.  Therefore, we utilize a combined method which integrates the numerical continuation procedure with simulations of the full PNP-CH system.} 
To test the stability of the stationary solutions computed by pde2path, we use LiveLink$^{\rm TM}$ for Matlab to solve the PNP-CH system, with initial condition attained from pde2path output, via a Comsol 5.2a finite element solver.  Whenever the stationary solution computed by pde2path is found to be unstable, the simulation of the PNP-CH system convergences to a new stable stationary solution.  If the  new stationary solution does not already reside on a known branch, it is used to find a new branch using numerical continuation. { Thus, there is a mutual flow of data between the continuation procedure and the PDE solver}. 

The pseudo-code is as follows.
\begin{enumerate}
\item Find stationary solution using numerical continuation, starting with a given (possibly approximate) stationary solution~${\bf u}_0^{\rm p2p}$.
\begin{enumerate}
\item At step i
\begin{enumerate}
\item Compute stationary solution~${\bf u}_i^{\rm p2p}$.
\item Solve the PNP-CH system with initial condition~${\bf u}(0,{\bf x})={\bf u}^{\rm p2p}_i$ until it reaches steady state at time~$T$.  Denote~${\bf u}^{\rm stationary}_i:={\bf u}(T,{\bf x})$.
\item The stationary solution~${\bf u}_i^{\rm p2p}$ is stable if
\[
\|{\bf u}^{\rm stationary}_i-{\bf u}_i^{\rm p2p}\|<{\rm TOL},
\]
and otherwise unstable.
\item If~${\bf u}_i^{\rm p2p}$ is unstable, and if~${\bf u}^{\rm stationary}_i$ does not reside on the same branch as the previously computed solution~${\bf u}^{\rm stationary}_{i-1}$, i.e.,
\[
\|{\bf u}_i^{\rm stationary}-{\bf u}_{i-1}^{\rm stationary}\|>TOL,
\]
then add~${\bf u}^{\rm stationary}_i$ to list~$\mathcal{L}$ of solutions on potentially new branches.
\end{enumerate}
\end{enumerate}
\item Save branch data~$\mathcal{B}_k=\{\sigma_i,\|{\bf u}_i^{\rm p2p}\|\}$
\item For each point in list~${\bf u}\in \mathcal{L}$ of solutions on potentially new branches, \begin{enumerate}\item verify that~${\bf u}$ does not reside on known branches~$\{\mathcal{B}_1,\mathcal{B}_2,\cdots,\mathcal{B}_k\}$ (i.e.,~$\|{\bf u}\|$ is at distance larger than TOL from branch curve).
\item Repeat stage 1, starting the numerical continuation from~${\bf u}$, i.e., setting~${\bf u}_0^{\rm p2p}= {\bf u}$
\end{enumerate}
\end{enumerate}
The combined method allows to map distinct solution branches more efficiently than considering solely the equations for the stationary states.  For example, we found that numerical continuation from point `a' retrieved the blue branch (associated with point `c') but not the red branch (associated with point `'e').  Therefore, mapping the three unstable branches emanating from point `a', see Figure~\ref{fig:bifurcation_V1}, using numerical continuation from this point is demanding.  Using the combined method, such issues are avoided.  For example, solving the PNP-CH system with initial conditions corresponding to solutions along the blue branch connecting points `a' and `b', gives rise to a stable stationary solution of the PNP-CH system on the red branch connecting points `e' and `f'.  The red branch is then traced back from this point to point `a' using continuation.
\section{Conclusions}\label{sec:discussion}
In this study, we have considered the existence and stability of stationary solutions of the PNP-steric system, and have shown that the { corresponding stationary} system gives rise to smooth multiple stationary solutions.  { The PNP-steric equation, however, turns out to be ill-posed at the parameter regimes where multiple solution arise.}  Following this finding,  we introduced a novel PNP-Cahn-Hilliard model which is valid at high concentrations.  We have shown that this model gives rise to multiple stationary solutions that are stable with respect to PNP-CH dynamics, and utilized bifurcation analysis and numerical continuation to map the various branches of stationary solutions.   In particular, we observe that the stationary solution profiles are essentially the same near the interfaces, and differ only by their behavior in the bulk, namely in regions where the electric field applied from the boundaries is screened.

 The PNP-steric model stems from the PNP-Lennard Jones (PNP-LJ) model which accounts for ion-ion steric interactions via the singular Lennard-Jones potential~\cite{eisenberg2010energy}.   The complicated structure of the PNP-LJ equations, however, gives rise to an highly demanding computational problem and to an intractable analytic problem.  Therefore, the PNP-steric model was derived using a leading order approximation of the Lennard-Jones potential~\cite{Horng:2012io}.  The fact that PNP-steric model fails at high concentrations, as shown in this work, implies that higher order approximations of the Lennard-Jones potential are required.  It is plausible that the second order approximation of the Lennard-Jones potential will give rise to gradient coefficient terms as in the PNP-CH model.  Derivation of a high-order PNP-steric model by utilizing more accurate approximations of the LJ potential is left for future work. 

The PNP-CH model differs from the broad family of generalized PNP models as it gives rise to bulk pattern formation.  From a physical point of view, emergence of periodic patterns or pattern formation, in general, does not seem natural in electrolyte systems - salt dissolved in a cup of water is not typically observed to spontaneously self-assemble into spatial nano-structures. Self-assembly of bulk nano-structure, however, is well documented in room-temperature ionic liquids, see~\cite{hayes2015structure} and references within.  Room-temperature ionic liquids are solvent-less mixtures of ions which remain liquid at room temperature.  In a sense, they provide a limiting example of the most concentrated electrolyte solutions.  Hence, sufficiently concentrated electrolyte solutions may also self-assemble into spatial nano-structures in the bulk~\cite{JPCL-Gavish-2017}.  

Electrolyte behavior near charged boundaries, aka the electric double layer structure, has been at the spotlight of electrolyte research due to  its importance within a wide range of areas including electrochemistry, biochemistry, physiology, and colloidal science.   
Recent works emphasize the significance of the coupling between bulk and interfacial nano-structures in such systems,~\cite{bier2017mean,gavish2017spatially,Yochelis2015,mezger2015solid}.
A study addressing the effect of bulk nano-structure on electrolyte behavior, and its coupling to the electric double layer structure will be presented elsewhere.


 Finally, one of the questions that motivated the study of bi-stability in modified PNP systems is whether bi-stability can explain gating phenomena in ionic channels.  This study does not address this question directly as the PNP-CH model with the no-flux boundary conditions considered in this study does not describe ion channels.  Nevertheless, the multiple solutions presented in this study are all associated with multiple bulk states,  and are not driven by boundary conditions, { see, e.g., Figure~\ref{fig:bifurcation_V1_solutions}}.  Thus, it is plausible that an appropriate PNP-CH model of ionic channels will give rise to similar multiple bulk states.  
 A systematic study of the stationary solutions of the PNP-CH model in an ion channel setting, and their corresponding current levels, will be presented elsewhere.

\section*{Acknowledgements}
This research was supported by the Technion VPR fund and from EU Marie--Curie CIG Grant 2018620.  The author thanks Hannes Uecker for valuable advice in the application of pde2path.  


\end{document}